 \newtheorem{thm}{Theorem}[section]
 \newtheorem{cor}[thm]{Corollary}
 \newtheorem{lem}[thm]{Lemma}
 \theoremstyle{definition}
 \newtheorem{defn}[thm]{Definition}
 \theoremstyle{remark}
 \newtheorem{rem}[thm]{Remark}
 \newtheorem*{ex}{Example}
 \numberwithin{equation}{section}
\begin{document}
%
%
%
%
%
%
%
%
%
\title[Optimal Decompositions of Translations of $L^{2}$-functions]
 {Optimal Decompositions of Translations of $L^{2}$-functions}
\author[Palle E.T. Jorgensen]{Palle E.T. Jorgensen}

\address{Department of Mathematics\\
The University of Iowa\\
14 MacLean Hall\\
Iowa City, IA 52242}

\email{jorgen@math.uiowa.edu}

\thanks{Work supported in part by the U.S. National Science
Foundation}
\author{Myung-Sin Song}
\address{Department of Mathematics and Statistics\\
Southern Illinois University Edwardsville\\
Campus Box 1653, Science Building\\
Edwardsville, IL 62026}
\email{msong@siue.edu}
\subjclass{Primary 47B40, 47B06, 06D22, 62M15; Secondary 42C40, 62M20}

\keywords{Spectrum, unitary operators, isometries, Hilbert space, spectral 
function, frames in Hilbert space, Parseval frame, Riesz, Bessel estimates, 
wavelets, prediction, signal processing.}

\date{Oct 19, 2007}
\dedicatory{To Guido Weiss}

\begin{abstract}
In this paper we offer a computational approach to the spectral function for a finite family of commuting operators, and give applications. 
Motivated by questions in wavelets and in signal processing, we study a 
problem about spectral concentration of integral translations of functions 
in the Hilbert space $L^{2}(\mathbb{R}^{n})$. Our approach applies more 
generally to families of $n$ arbitrary commuting unitary operators in a 
complex Hilbert space $\mathcal{H}$, or equivalent the spectral theory of 
a unitary representation $U$ of the rank-$n$ lattice $\mathbb{Z}^{n}$ in 
$\mathbb{R}^{n}$. Starting with a non-zero vector $\psi \in \mathcal{H}$, 
we look for relations among the vectors in the cyclic subspace in 
$\mathcal{H}$ generated by $\psi$. Since these vectors 
$\{U(k)\psi | k \in \mathbb{Z}^{n}\}$
involve infinite ``linear combinations," the problem arises of giving 
geometric characterizations of these non-trivial linear relations. A 
special case of the problem arose initially in work of Kolmogorov under 
the name $L^{2}$-independence. This refers to \textit{infinite} linear 
combinations of integral translates of a fixed function with 
$l^{2}$-coefficients.  While we were motivated by the study of translation 
operators arising in wavelet and frame theory, we stress that our present 
results are general; our theorems are about spectral densities for general 
unitary operators, and for stochastic integrals.
\end{abstract}

\maketitle
\tableofcontents

\section{Introduction}
\label{sec:1}

The study of frames in Hilbert space serves as an oprator theoretic 
context for a variety of wavelet problems (see e.g., \cite{Dau92}), and 
more generally for the study of generalized bases in harmonic analysis. 
Loosely speaking, a frame is generalized basis system involving the 
kind of ``over completeness." Since many wavelet constructions do not 
yield orthogonality, the occurrence of wavelet frames is common. In this 
paper, we will consider various spectral problems in frame and wavelet 
theory in the context of a single unitary operator $T$ in a fixed Hilbert 
space $\mathcal{H}$, or more generally a finite commuting family of 
unitaty operators. This of course is equivalent with the consideration of 
a unitary representation of the group $\mathbb{Z}$ of all integers, or of 
$\mathbb{Z}^{d}$ in the Hilbert space $\mathcal{H}$. Since every such 
representation is unitarily equivalent to a representation by 
multiplication. We will take advantage of this fact, and account for over 
completeness in terms of spectrum in the sense of spectral multiplicity 
for unitary operators, see e.g., \cite{Hel64, Hel86}. For general facts on 
operators in Hilbert space, the reader may refer to e.g., \cite{Con90}. 

An arbitrary infinite configuration of vectors $(f_{k})_{k \in \mathbb{Z}}$ 
in a Hilbert space $\mathcal{H}$ can be quite complicated, and it will be 
difficult to make sense of finite and infinite linear combinations 
$\sum_{k \in \mathbb{Z}}c_{k}f_{k}$.  However, in applications, a 
particular system of vectors $f_{k}$ may often be analyzed with the use of 
a single unitary operator $U$ in $\mathcal{H}$.  This happens if there is a 
fixed vector $\varphi \in \mathcal{H}$ such that $f_{k} = U^{k}\varphi$ 
for all $k \in \mathbb{Z}$.  When this is possible, the spectral theorem 
will then apply to this unitary operator.  A key idea in our paper is to 
identify a spectral density function computed directly from the pair 
$(\varphi, U)$.  

Hence the study of linear expressions $\sum_{k}c_{k}f_{k}$ may be done 
with the aid of the spectral function for this pair $(\varphi, U)$.  A 
spectral function for a unitary operator $U$ is really a system of 
functions $(p_{\varphi})$, one for each cyclic subspace 
$\mathcal{H}(\varphi)$.  In each cyclic subspace, the function $p_{\varphi}$ 
is a complete unitary invariant for $U$ restricted to 
$\mathcal{H}(\varphi)$: by this we mean that the function $p_{\varphi}$ 
encodes all the spectral data coming from the vectors $f_{k}=U^{k}\varphi$, 
$k \in \mathbb{Z}$. For background literature on the spectral function and 
its applications we refer to \cite{BH05, JP05, LWW04, PSWX04, Sad06, TT07}. 

       In summary, the spectral representation theorem is the assertion 
that commuting unitary operators in Hilbert space may be represented as 
multiplication operators in an $L^{2}$-Hilbert space. The understanding 
is that this representation is defined as a unitary equivalence, and 
that the $L^{2}$-Hilbert space to be used allows arbitrary measures, and 
$L^{2}$ will be a Hilbert space of vector valued functions, 
see e.g., \cite{Hel86}.

      The term ``frame" refers to a generalized basis system involving the 
kind of ``over completeness" that arises in signal processing problems 
with redundancy, see e.g., \cite{BCHL06, CKS06} and \cite{Chr03}. It is our 
aim here to account for this kind of over completeness via representation 
by operators and vectors arising from an appropriately chosen spectral 
representation. 

      While our theorems apply to the general case of the spectral function 
for a finite family of commuting operators in Hilbert space, our motivation 
is from the case of wavelet operators in the Hilbert spaces 
$L^{2}(\mathbb{R})$ and $L^{2}(\mathbb{R}^{d})$. Hence section \ref{sec:2} 
begins with function theory. The general and axiomatic setting is introduced 
in sect 3, which also contains the main technical lemmas. The remaining of 
our paper deals with applications, and they are divided into three parts: 
operator theory, frame theory and wavelet resolution spaces 
(sect \ref{sec:4}), and finally stochastic integration (sect \ref{sec:5}). 
But our recurrent theme is the spectral function.

\subsubsection*{Notation.} We will be using $\mathbb{Z}$ for the group of 
integers, $\mathbb{R}$ for the real numbers, $\mathbb{C}$ for the 
complexes, and $\mathbb{T}$ for the circle, or equivalently the 
one-torus, $\mathbb{T} = \{z \in \mathbb{C}| |z| = 1\}$. The terminology 
$L^{2}(\mathbb{R})$, or $L^{2}(\mathbb{R}^{n})$ will refer to the usual 
$L^{2}$-spaces defined from Lebesgue measure. We will have occasion to 
consider other measures as well, but it will be understood that they are 
Borel measures. They may be singular or not. The terminology ``absolutely 
continuous" will refer to the comparison with Lebesgue measure in the 
appropriate context. 

      To see the connection to spectral multiplicity for unitary operators, 
consider the unitary operator $T$ in the Hilbert space 
$\mathcal{H}:= L^{2}(\mathbb{R})$ of translation by $1$, i.e., 
$(Tf)(x) := f(x - 1)$. In this case, this unitary operator $T$ is 
represented by multiplication via the Fourier transform $W$, where we view 
$W$ as a unitary operator, $W : L^{2}(\mathbb{R}) \to L^{2}(\mathbb{R})$. 
But we shall be interested in localizing the analysis of $T$, so in 
considering the action of $T$ on a single vector $\psi$ in 
$L^{2}(\mathbb{R})$, or on a finite family. The action of $T$ on a single 
vector is determined by a scalar measure, and the action on a finite family 
of vectors by a matrix valued measure, as we outline below.   

     In order to make a direct connection to the study of closed translation 
invariant subspaces in $L^{2}(\mathbb{R})$, or $L^{2}(\mathbb{R}^{d})$ we 
begin in section \ref{sec:2} with the case when our Hilbert space 
$\mathcal{H}$ is $L^{2}(\mathbb{R})$, and when the unitary operator $T$ 
is translation by $1$, i.e., $(Tf)(x) := f(x - 1)$.
In that case, of course, the powers of $T$ by $k \in \mathbb{Z}$, 
$(T^{k} f)(x) := f(x - k)$  for $k \in \mathbb{Z}$, $x \in \mathbb{R}$, and 
$f \in L^{2}(\mathbb{R})$.

       In the context of over completeness in the Hilbert space 
$L^{2}(\mathbb{R})$, the question of non-trivial infinite linear 
combinations arises naturally, for example in the study of translation 
invariant subspaces in $L^{2}(\mathbb{R})$, see \cite{Dau92} and \cite{Hel64}. 
In this context, we must study infinite sums 
$\sum_{k \in \mathbb{Z}} c_{k}T^{k}f$ of the translates 
$(T^{k} f)(x) := f(x - k)$ for $k \in \mathbb{Z}$, when $f$ is a fixed 
function. Since these translates are not assumed to be orthogonal, one must 
be careful in the consideration of such infinite combinations. For example 
should the coefficients $c_{k}$, $k \in \mathbb{Z}$, be chosen from $l^{2}$? 
Or from what sequence space? 

      The question of infinite linear dependencies of translates arises in 
Kolmogorov's and Wiener's prediction theory (see e.g., \cite{MiSa80}), and 
it is referred to there as $L^{2}$-independence vs $L^{2}$-dependence. This 
framework will be naturally included in our considerations below. However, 
we will begin with $L^{2}(\mathbb{R})$-considerations. 

      As it turns out, our approach applies more generally to families 
of $n$ arbitrary commuting unitary operators in a complex Hilbert space 
$\mathcal{H}$, or equivalently to the spectral theory of a unitary 
representation $U$ of the rank-$n$ lattice $\mathbb{Z}^{n}$ in 
$\mathbb{R}^{n}$. Starting with a non-zero vector $\psi$ in $\mathcal{H}$, 
we look for relations among the vectors in the cyclic subspace 
$\mathcal{H}(\psi)$ in $\mathcal{H}$ generated by $\psi$. Since these 
vectors 
\[
  \{U(k)\psi | k \in \mathbb{Z}^{n}\}
\]
involve infinite ``linear combinations," the problem arises of giving 
geometric characterizations of these non-trivial linear relations.
 
      This is the setup in section \ref{sec:3} below. Our study of the 
cyclic subspace $\mathcal{H}(\psi)$ is done with the aid of an 
associated spectral measure $\mu = \mu_{\psi}$, and we begin in 
section \ref{sec:3} by introducing an isometric isomorphism from 
$L^{2}(\mu)$ onto the cyclic subspace $\mathcal{H}(\psi)$.
 
     In the multivarable case, we must make use of funcion theory on the 
$n$-torus $\mathbb{T}^{n}$, and the reader is referred to \cite{Rud69, Rud86} 
for that.

\section{A Lemma}
\label{sec:1a}
In wavelet theory, there are natural choices of Hilbert spaces and of 
generating functions.  For each such choice, one wishes to consider linear 
combinations for the purpose of decomposing general vectors.  For this to 
be successful it helps that the vectors $f_{k}$ have the following 
representation as outlined in Introduction: with the use of 
a single unitary operator $U$ in $\mathcal{H}$, if there is a 
fixed vector $\varphi \in \mathcal{H}$ such that $f_{k} = U^{k}\varphi$ 
for all $k \in \mathbb{Z}$. 

The following lemma gives a necessary and sufficient condition for 
this to work.  The crux is two versions of stationarity.

\begin{lem}
\label{L:1.1}
Let $\mathcal{H}$ be a Hilbert space, and let 
$\{f_{k}\}_{k \in \mathbb{Z}} \subset \mathcal{H} \setminus \{0\}$ be 
given.  Then the following two conditions are equivalent:
\begin{itemize}
  \item[(i)] $\langle f_{j+n} | f_{k+n} \rangle_{\mathcal{H}}
  = \langle f_{j} | f_{k} \rangle_{\mathcal{H}}$,  for all 
  $j, k, n, \in \mathbb{Z}$; and\\
  \item[(ii)] There is a Gaussian probability space $(\Omega, P)$, and a 
  stationary stochastic process $(X_{k})_{k \in \mathbb{Z}}$, i.e., each 
  $X_{k}: \Omega \to \mathbb{C}$ a random variable, and an isometry 
  $W: \mathcal{H} \to L^{2}(\Omega, P)$ such that $Wf_{k}=X_{k}$ for all 
  $k \in \mathbb{Z}$, and 
  \[
    \langle f_{j} | f_{k} \rangle_{\mathcal{H}} = E(\overline{X_{j}} X_{k})
   = \int_{\Omega}\overline{X_{j}}(\omega)X_{k}(\omega)dP(\omega).
  \]
\end{itemize}
\end{lem}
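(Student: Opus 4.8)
The plan is to prove the two implications separately, with essentially all of the content residing in $(i)\Rightarrow(ii)$; the reverse is a short consequence of the definitions. For $(ii)\Rightarrow(i)$, I would assume the Gaussian data and use that stationarity of $(X_{k})_{k\in\mathbb{Z}}$ forces its second-order structure to be shift invariant, so that $E(\overline{X_{j+n}}\,X_{k+n}) = E(\overline{X_{j}}\,X_{k})$ for all $j,k,n$. Feeding this into the isometry $W$ and the identity $\langle f_{j}\,|\,f_{k}\rangle_{\mathcal{H}} = E(\overline{X_{j}}\,X_{k})$ transports the invariance back to $\mathcal{H}$ and yields $(i)$ in a couple of lines.

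The substantive direction is $(i)\Rightarrow(ii)$. The first observation is that $(i)$ says precisely that the Gram kernel is Toeplitz: $\langle f_{j}\,|\,f_{k}\rangle_{\mathcal{H}}$ depends only on the difference $j-k$, so that $r(m):=\langle f_{m}\,|\,f_{0}\rangle_{\mathcal{H}}$ is a positive-definite function on $\mathbb{Z}$ — exactly the covariance sequence of a stationary process, with an associated spectral measure $\mu$ on $\mathbb{T}$ coming from Herglotz's theorem. To realize this as an actual Gaussian process, I would construct the complex Gaussian Hilbert space (the isonormal Gaussian field) over $\mathcal{H}$: a probability space $(\Omega,P)$ together with a complex-linear isometry $W:\mathcal{H}\to L^{2}(\Omega,P)$ whose image consists of centered, circularly-symmetric complex Gaussian variables, normalized so that $E(\overline{Wg}\,Wh)=\langle g\,|\,h\rangle_{\mathcal{H}}$ and $E(Wg\,Wh)=0$. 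Setting $X_{k}:=Wf_{k}$ then delivers both the required isometry and the covariance identity $\langle f_{j}\,|\,f_{k}\rangle_{\mathcal{H}}=E(\overline{X_{j}}\,X_{k})$ at once.

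It then remains to verify that $(X_{k})_{k\in\mathbb{Z}}$ is stationary. Since the $X_{k}$ lie in a jointly Gaussian family with mean zero and vanishing pseudo-covariance, every finite-dimensional joint law is completely determined by the Hermitian covariance matrix $\bigl(E(\overline{X_{k_{a}}}\,X_{k_{b}})\bigr)_{a,b}$. Hypothesis $(i)$ makes this matrix invariant under the simultaneous shift $k_{a}\mapsto k_{a}+n$, so the finite-dimensional distributions, and hence the whole process, are shift invariant; thus $(X_{k})$ is stationary and every assertion in $(ii)$ holds.

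The main obstacle I anticipate is the careful construction of the complex Gaussian Hilbert space with the correct normalization, and in particular arranging circular symmetry so that $E(X_{j}X_{k})=0$. This matters because, in the complex-valued setting, a Gaussian law is pinned down not by the Hermitian covariance alone but also by the pseudo-covariance $E(X_{j}X_{k})$; without forcing the latter to vanish, stationarity would not follow from $(i)$ by itself. A secondary point to dispatch is the routine reduction that allows $W$ to be isometric on all of $\mathcal{H}$ rather than merely on $\overline{\operatorname{span}}\{f_{k}\}$, which one obtains either by taking the Gaussian field over the whole space $\mathcal{H}$ or by enlarging $\Omega$ to carry an orthogonal complement.
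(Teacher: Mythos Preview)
Your proposal is correct and, at heart, coincides with the paper's argument: both realize the Toeplitz Gram kernel $\langle f_{j}\,|\,f_{k}\rangle$ as the covariance of a centered complex Gaussian process and read off stationarity from the shift invariance of the covariance. The paper packages the construction slightly differently: it writes down, for each $n$, the Gaussian law on $\mathbb{C}^{2n+1}$ with covariance $E_{n}(\overline{z_{j}}z_{k})=\langle f_{j}\,|\,f_{k}\rangle$, checks Kolmogorov consistency, and invokes the extension theorem to obtain $(\Omega,P)$ with $X_{k}(\omega)=\omega(k)$. Your route via the isonormal complex Gaussian field over $\mathcal{H}$ is the same construction viewed ``from above,'' and it has the advantage that the isometry $W$ is defined on all of $\mathcal{H}$ from the outset, so the secondary issue you flag about extending $W$ beyond $\overline{\operatorname{span}}\{f_{k}\}$ is handled automatically. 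You are also more careful than the paper on one point it leaves implicit: in the complex case one must impose circular symmetry (vanishing pseudo-covariance $E(X_{j}X_{k})=0$) so that the Hermitian covariance alone determines the finite-dimensional laws and hence stationarity; the paper's phrase ``the usual Gaussian density in $\mathbb{C}^{2n+1}$'' is tacitly invoking exactly this.
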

\begin{proof}
For every $n \in \mathbb{N}$, consider the usual Gaussian density in 
$\mathbb{C}^{2n+1}$, i.e., $z=(z_{-n}, \cdots, z_{0}, z_{1}, \cdots, z_{n})$ 
with mean $E_{n}(z_{j})=0$ and covariance 
\[
  E_{n}(\overline{z_{j}}z_{k})=\langle f_{j} | f_{k} \rangle_{\mathcal{H}} 
  \text{  , } -n \leq j, k \leq n.
\]
It is easy to see that this is a Kolgomorov consistent system 
\cite{Jor06, Nel69}.  The existence of the infinite Gaussian space 
$(\Omega, P)$ with the desired properties now follows.  In particular $\Omega$ 
is a space of function $\omega : \mathbb{Z} \to \mathbb{C}$, and 
$X_{k}(\omega) = \omega(k)$, $k \in \mathbb{Z}$.
\end{proof}

\begin{rem}
\label{R:1.2}
In section \ref{sec:5} we will be using an analogous family of Gaussian 
Hilbert spaces, and the associated spectral density functions.  
\end{rem}

\section{Notation and Statement of the Problem}
\label{sec:2}
\subsection*{Preliminaries}
While we will be stating our main results in the general context of 
unitary operators in Hilbert space (section \ref{sec:3} below), and 
stochastic processes (section \ref{sec:5}), it is helpful to first consider 
a prime example illustrating the kind of interplay between function theory 
and Hilbert space geometry. Such an example is afforded by the case of the 
unit-translation operator in the Hilbert space $L^{2}(\mathbb{R})$, and we 
begin with this below.

We will study the interconnection between $L^{2}(\mathbb{R})$-functions 
and their restrictions to unit intervals $I_{n}=[n,n+1)$ for 
$n \in \mathbb{Z}$.  Since the individual intervals $I_{n}$ arise from
$I_{0}=[0,1)$ by integer translation, it will be convenient to compare
the two Hilbert spaces $L^{2}(0,1)$ and $L^{2}(\mathbb{R})$.  

The $L^{2}$-norm on $\mathbb{R}$ may be decomposed as 
\[
  \int_{\mathbb{R}}|\psi(x)|^{2}dx 
  = \sum_{n \in \mathbb{Z}} \int_{n}^{n+1} |\psi(x)|^{2}dx
  = \int_{0}^{1} \sum_{n \in \mathbb{Z}} |\psi(x+n)|^{2}dx.
\]
So if we introduce $p_{\psi}(x):=\sum_{n\in\mathbb{Z}}|\psi(x+n)|^{2}
= \underset{\mathbb{Z}}{\text{PER}}|\psi|^{2}$, then $p_{\psi} \in L^{1}(0,1)$, and 
$\int_{0}^{1}p_{\psi}(x)dx=\int_{\mathbb{R}}|\psi(x)|^{2}dx
=\|\psi\|_{L^{2}(\mathbb{R})}^{2}$

We will study what properties of a given $L^{2}(\mathbb{R})$-function $\psi$
can be predicted from the local p-version.  The problem is intriguing, since
the fixed function $p \in L^{1}(0,1)$ may be decomposed in many different 
ways as $p(x)=\text{PER}|\psi(x)|^{2}$ with $\psi \in L^{2}(\mathbb{R})$.

We will study the initial function $p$ via $L^{2}(\mu)$ where 
\begin{equation}
\label{E:mu}
  d\mu(x)=p(x)dx.
\end{equation}
In fact, we will study $L^{2}(\mu)$ even if the measure $\mu$ is not assumed
to be absolutely continuous with respect to Lesbegue measure.  But in the
absolutely continuous case, the function $p$ will pop up as a Radon-Nikodym
derivative.  

Moreover, we will need the following matrix version of (\ref{E:mu}).  Let 
$N$ be a positive integer and let $M_{N}^{+}$ denote the complex $N \times N$
matrices $P$ satisfying $spec(P) \subset [0, \infty)$.

\begin{defn}
We say that a measurable function $[0,1) \ni x \mapsto P(x) \in M_{N}^{+}$ is
$L^{1}$ if for all vectors $v \in \mathbb{C}^{N}$ the functions 
$x \to \langle v | P(x) v \rangle$ is in $L^{1}(0,1)$
\end{defn}

\subsubsection*{Notation} For $v, w \in \mathbb{C}^{N}$, we set 
\[
  \langle v | w \rangle := \sum_{k=1}^{N} \overline{v_{k}}w_{k}.
\]
If $\mu$ is any Borel measure supported on a subset of $\mathbb{R}$, and 
$f, g \in L^{2}(\mu)$, we set 
\[
  \langle f | g \rangle_{L^{2}(\mu)}= \int \overline{f(x)}g(x)d\mu (x).
\]

Let $\mathcal{F} \subset L^{2}(\mathbb{R})$ be a finite subset of $N$ 
elements.  We then define a function 
\[
  P=P_{\mathcal{F}}:[0,1] \to M_{N}^{+}
\]
as follows.  Let $\mathcal{F}$ index the rows and the columns in $P$ via
$(P_{\varphi, \psi})$ $\varphi, \psi \in \mathcal{F}$, where 
\[
  (P_{\varphi, \psi})=\text{PER}(\overline{\varphi(x)}\psi(x)).
\]
An easy calculation shows that $P$ is $L^{1}$, and that 
\[
  \int_{0}^{1} P_{\varphi, \psi}(x)dx 
  = \langle \varphi | \psi \rangle_{L^{2}(\mathbb{R})}
  \text{, for all } \varphi, \psi \in \mathcal{F}.
\]

\begin{lem}
\label{L:2.2}
Let $P:[0,1] \to M_{N}^{+}$ be given and suppose that 
$x \mapsto \|P(x)\|_{2,2} \in L^{\infty}$ then for all 
$v \in \mathbb{C}^{N}$, we have the estimate 
\[
  \|P(x)v\|^{2} \leq \lambda \langle v | P(x)v \rangle
\] where $\lambda := \underset{x}{\text{sup}} \|P(x)\|_{2,2}$
\end{lem}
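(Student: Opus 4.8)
The plan is to work pointwise. Fix $x \in [0,1)$ and abbreviate $A := P(x)$, regarded as a single fixed matrix in $M_{N}^{+}$. Since the members of $M_{N}^{+}$ are the positive matrices, $A$ is self-adjoint, $A = A^{*}$, and its operator norm satisfies $\|A\|_{2,2} \leq \lambda$ by the definition of $\lambda$ as the supremum over $x$. The whole estimate then reduces to a statement about the single positive matrix $A$, made uniform in $x$ only through the one constant $\lambda$; distinct values of $x$ never interact, so no measure-theoretic input beyond the boundedness hypothesis $x \mapsto \|P(x)\|_{2,2} \in L^{\infty}$ is needed.

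First I would rewrite the left-hand side using self-adjointness:
\[
  \|Av\|^{2} = \langle Av | Av \rangle = \langle v | A^{*}Av \rangle = \langle v | A^{2}v \rangle .
\]
Thus the claimed inequality $\|P(x)v\|^{2} \leq \lambda \langle v | P(x)v \rangle$ is equivalent to $\langle v | A^{2} v \rangle \leq \lambda \langle v | A v \rangle$ for all $v \in \mathbb{C}^{N}$, i.e.\ to the matrix inequality $A^{2} \leq \lambda A$, or equivalently to the positivity of $\lambda A - A^{2} = A(\lambda I - A)$.

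To establish $A(\lambda I - A) \geq 0$ I would invoke the spectral theorem for the self-adjoint matrix $A$: write $A = \sum_{j} \mu_{j} E_{j}$, where the $\mu_{j} \geq 0$ are the distinct eigenvalues and the $E_{j}$ are the mutually orthogonal spectral projections summing to $I$. Because $A$ is positive with $\|A\|_{2,2} \leq \lambda$, each eigenvalue obeys $0 \leq \mu_{j} \leq \lambda$. The functional calculus then gives $\lambda A - A^{2} = \sum_{j} \mu_{j}(\lambda - \mu_{j}) E_{j}$, and every coefficient $\mu_{j}(\lambda - \mu_{j})$ is nonnegative since $\mu_{j} \in [0,\lambda]$. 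Hence $\lambda A - A^{2}$ is a nonnegative combination of orthogonal projections, so it is positive semidefinite, which is exactly the inequality we wanted.

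The computation is short, and the one point deserving care — what I would flag as the main obstacle — is the role of self-adjointness. The hypothesis is phrased as $\text{spec}(P) \subset [0,\infty)$, but the argument genuinely uses that $P(x)$ is Hermitian, not merely that its spectrum is real and nonnegative: for a non-normal matrix with nonnegative spectrum the identity $\|Av\|^{2} = \langle v | A^{2}v \rangle$ already fails and the estimate can break. I would therefore make explicit at the outset that $M_{N}^{+}$ consists of positive (hence self-adjoint) matrices, consistent with the Gram-type matrices $P_{\mathcal{F}}$ introduced just above, after which the pointwise argument applies verbatim for every $x$ and yields the asserted bound with $\lambda = \sup_{x} \|P(x)\|_{2,2}$.
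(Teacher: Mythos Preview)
Your proof is correct and follows essentially the same route as the paper: reduce to a single positive matrix $A=P(x)$, rewrite $\|Av\|^{2}=\langle v\mid A^{2}v\rangle$, and use the spectral resolution $A=\sum_j \mu_j E_j$ to conclude $A^{2}\le (\max_j \mu_j)\,A\le \lambda A$. Your added remark that self-adjointness (not merely nonnegative spectrum) is what makes $\|Av\|^{2}=\langle v\mid A^{2}v\rangle$ work is a helpful clarification, but otherwise the arguments coincide.
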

\begin{proof}
Since $ \|P(x)v\|^{2} = \langle v | P(x)^{2}v \rangle$, the assertion
is a statement about a single $P \in M_{N}^{+}$ refering now to the usual
ordering on the Hermitian matrices.  Let $P=\sum \lambda_{i}E_{i}$ be the
spectral resolution with $(E_{i})$ denoting orthogonal projections,
i.e., $E_{j}E_{k}=\delta_{j,k}E_{j}$.  Then 
\[
  P^{2}=\sum_{j}\lambda_{j}^{2}E_{j} \leq 
  \underset{k}{\text{max}} \lambda_{k} \sum_{j} \lambda_{j}E_{j} 
  = (\underset{k}{\text{max}} \lambda_{k}) \text{ } P.
\]
The result is immediate from this.
\end{proof}

\begin{defn}
\label{D:2.3}
We shall use the following notation $e_{k}$, $k \in \mathbb{Z}$ for the Fourier
basis in $L^{2}(0,1)$:
\[
  e_{k}(x) = e^{i2\pi kx}, \text{  } x \in [0,1].
\]
With this convention, the Parseval identity in $L^{2}(0,1)$ reads, 
\begin{equation}
\label{E:ckl}
  \sum_{k \in \mathbb{Z}} |c_{k}|^{2} 
  = \int_{0}^{1}\left\vert \sum_{k \in \mathbb{Z}}c_{k}e_{k}(x)\right\vert^{2}dx, \text{ for all }
  c=(c_{k}) \in l^{2}(\mathbb{Z}).
\end{equation}
\end{defn}

\begin{defn}
\label{D:2.4}
Let $\mu$ be a positive Borel measure supported in the unit-interval 
$[0,1] \subset \mathbb{R}$.  Assume $\mu([0,1])< \infty$.  Let 
$\mathcal{D} \subset l^{2}(\mathbb{Z})$ of all finite sequence 
$(c_{k})_{k \in \mathbb{Z}}$; i. e., $c_{k} = 0$ for all but a finite set
of index values of $k$.  We define a linear operator 
$F: \mathcal{D} \to L^{2}(\mu)$ as follows: 
\begin{equation}
\label{E:Fck}
  F((c_{k})):= \sum_{k}c_{k}e_{k}(x).
\end{equation}
\end{defn}

\begin{lem}
\label{L:2.5}
If 
\begin{equation}
\label{E:ek}
  \sum_{k \in \mathbb{Z}}\left\vert \int e_{k}(x)d\mu (x)\right\vert^{2} < \infty,
\end{equation} 
then the operator $F$ is closable.
\end{lem}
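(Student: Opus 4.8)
The plan is to invoke the standard closability criterion: a densely defined operator between Hilbert spaces is closable if and only if no pair $(0,g)$ with $g\neq 0$ lies in the closure of its graph. Here $F$ is densely defined because $\mathcal{D}$, the space of finite sequences, is dense in $l^{2}(\mathbb{Z})$. So I would suppose $(c^{(n)})\subset\mathcal{D}$ with $c^{(n)}\to 0$ in $l^{2}(\mathbb{Z})$ and $Fc^{(n)}\to g$ in $L^{2}(\mu)$, and aim to show $g=0$. The whole point of the hypothesis (\ref{E:ek}) is to make this forced.

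The key computation is to test $g$ against the exponentials. Writing $a_{j}:=\int e_{j}(x)\,d\mu(x)$ for the Fourier--Stieltjes coefficients of $\mu$, the hypothesis (\ref{E:ek}) says precisely that $(a_{j})_{j\in\mathbb{Z}}\in l^{2}(\mathbb{Z})$. For each fixed $m\in\mathbb{Z}$ and each $c\in\mathcal{D}$ a direct calculation gives
\[
  \langle Fc \,|\, e_{m}\rangle_{L^{2}(\mu)}
  =\sum_{k}\overline{c_{k}}\int e_{m-k}(x)\,d\mu(x)
  =\sum_{k}\overline{c_{k}}\,a_{m-k}
  =\langle c \,|\, (a_{m-k})_{k}\rangle_{l^{2}}.
\]
Since $(a_{m-k})_{k}$ is a fixed $l^{2}$-sequence (a shift-reflection of $(a_{j})$, hence of the same finite norm $\|(a_{j})\|_{l^{2}}$), this says $e_{m}\in\operatorname{dom}(F^{*})$ with $F^{*}e_{m}=(a_{m-k})_{k}$. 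Applying it to $c^{(n)}$ and letting $n\to\infty$, the left side tends to $\langle g\,|\,e_{m}\rangle_{L^{2}(\mu)}$ while the right side tends to $0$, because $c^{(n)}\to 0$ and $(a_{m-k})_{k}$ is fixed. Hence $\langle g\,|\,e_{m}\rangle_{L^{2}(\mu)}=0$ for every $m\in\mathbb{Z}$.

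It remains to promote orthogonality to all $e_{m}$ into $g=0$, i.e.\ to know that $\{e_{m}\}_{m\in\mathbb{Z}}$ is total in $L^{2}(\mu)$. Viewing $\mu$ as a finite Borel measure on the torus $\mathbb{T}\cong[0,1)$ (the $e_{m}$ are periodic, so this is the natural reading of the statement), the trigonometric polynomials form a conjugation-closed, point-separating unital subalgebra of $C(\mathbb{T})$, hence are dense in the sup norm by Stone--Weierstrass; since $\mu$ is finite, $C(\mathbb{T})$ is $L^{2}(\mu)$-dense and sup-norm convergence implies $L^{2}(\mu)$-convergence, so $\operatorname{span}\{e_{m}\}$ is $L^{2}(\mu)$-dense. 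Thus $\{e_{m}\}$ is total, $g=0$ follows, and $F$ is closable. Equivalently, the computation above shows $\operatorname{dom}(F^{*})\supset\operatorname{span}\{e_{m}\}$ is dense, which is the adjoint form of closability.

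The one point that genuinely needs the hypothesis — and where (\ref{E:ek}) does all the work — is that the representing sequences $(a_{m-k})_{k}$ actually lie in $l^{2}$, so that each $e_{m}$ is in the domain of $F^{*}$; without summability of the $a_{j}$ these functionals need not be bounded on $l^{2}(\mathbb{Z})$ and the argument collapses. The remaining totality step is classical, modulo the harmless identification of $[0,1]$ with the circle, and is therefore the easier half of the proof.
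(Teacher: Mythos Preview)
Your proof is correct and follows essentially the same route as the paper: show that each $e_{m}$ lies in $\operatorname{dom}(F^{*})$ because the hypothesis forces $(a_{m-k})_{k}\in l^{2}$, and then invoke density of $\operatorname{span}\{e_{m}\}$ in $L^{2}(\mu)$ to conclude closability. The paper states the adjoint-domain criterion directly and asserts the density of the exponentials without justification; you additionally run the equivalent graph-closure argument and supply the Stone--Weierstrass step, but these are elaborations of the same idea rather than a different approach.
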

\begin{proof}
An operator is said to be \textit{closable} if the closure of its graph 
$(c, Fc)_{c \in \mathcal{D}}$ is again a graph of a linear operator.  In 
that case, we say that the resulting operator is the closure of $F$.  It 
is known that to test closability we only need to check that the domain of 
the adjoint operator $F^{*}$ is dense.  The formula for $F^{*}$ is as follows:
\[
  \langle F^{*}f | c \rangle_{l^{2}} = \langle f | Fc \rangle_{L^{2}(\mu)}, 
  \text{  } f \in \text{dom}(F^{*}), \text{  } c \in \mathcal{D}.
\]
It follows that 
\[
  (F^{*}f)_{k} = \int_{0}^{1}\overline{e_{k}(x)}f(x)d\mu (x)
\]
Hence, (\ref{E:ek})
is simply saying that the Fourier functions $e_{k}$ belong to dom$(F^{*})$.
Since the span of the functions $e_{k}$ is dense in $L^{2}(\mu)$, we 
conclude that $F$ is closable.
\end{proof}

We shall also need the following fundamental result from operator theory:
\begin{cor}
Suppose (\ref{E:ek}) is satisfied, denote the closure of the operator $F$
by the same symbol.  Then $F^{*}F$ is selfadjoint operator, and there is a 
partial isometry $U$ such that $F=U(F^{*}F)^{1/2}=(FF^{*})^{1/2}U$.  The 
partial isometry $U$ will be chosen with initial space equal to 
$l^{2} \ominus ker(F)$ and final space equal to the closure of the range 
of $F$. 
\end{cor}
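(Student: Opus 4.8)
The plan is to recognize this statement as the polar decomposition theorem for a closed, densely defined (generally unbounded) operator, and to assemble it from three standard ingredients: von Neumann's theorem on $F^{*}F$, the functional calculus for a positive selfadjoint operator, and the usual isometric-extension construction. By Lemma \ref{L:2.5}, hypothesis (\ref{E:ek}) guarantees that $F$ is closable; replacing $F$ by its closure (the same symbol, as instructed), we obtain a closed operator $F$ with dense domain inside $l^{2}$, so the general theory of closed operators in Hilbert space applies (see \cite{Con90}).

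First I would invoke von Neumann's theorem: for any closed densely defined $F$ between Hilbert spaces, $F^{*}F$ is a positive selfadjoint operator on $l^{2}$, and its domain $\text{dom}(F^{*}F)=\{\xi\in\text{dom}(F):F\xi\in\text{dom}(F^{*})\}$ is a core for $F$. Applying the spectral theorem to $F^{*}F$ then yields its positive square root $|F|:=(F^{*}F)^{1/2}$, which is again positive selfadjoint, with $\text{dom}(|F|)=\text{dom}(F)$ and the fundamental length identity
\[
  \||F|\xi\|_{l^{2}} = \|F\xi\|_{L^{2}(\mu)}, \qquad \xi\in\text{dom}(F).
\]
I would first verify this on the core via $\langle\xi\,|\,F^{*}F\xi\rangle=\|F\xi\|^{2}=\||F|\xi\|^{2}$ and then extend it to all of $\text{dom}(F)$ by closedness.

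Next I would define $U$ on the range of $|F|$ by $U|F|\xi:=F\xi$. The length identity shows at once that $U$ is well defined (equal inputs of equal $|F|$-length map to equal $F$-values) and isometric, hence it extends to an isometry from $\overline{\text{ran}(|F|)}$ onto $\overline{\text{ran}(F)}$; setting $U=0$ on the orthogonal complement produces a partial isometry. Since the length identity also gives $\ker(|F|)=\ker(F)$, we have $\overline{\text{ran}(|F|)}=(\ker|F|)^{\perp}=l^{2}\ominus\ker(F)$, so the initial space is exactly $l^{2}\ominus\ker(F)$ and the final space is $\overline{\text{ran}(F)}$, as claimed. By construction $F=U|F|=U(F^{*}F)^{1/2}$. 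For the companion factorization, I would use the intertwining relation $U(F^{*}F)^{1/2}=(FF^{*})^{1/2}U$, which follows because $U$ carries the spectral data of $F^{*}F$ onto that of $FF^{*}$ (equivalently $U|F|U^{*}=|F^{*}|$ on the relevant subspaces); substituting into $F=U|F|$ gives $F=(FF^{*})^{1/2}U$.

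I expect the main obstacle to be the unbounded-operator bookkeeping rather than any single hard estimate: namely von Neumann's selfadjointness of $F^{*}F$ together with the fact that $\text{dom}(F^{*}F)$ is a core for $F$, and the accompanying claim that $|F|$ and $F$ share a common domain on which their lengths agree. Once the length identity is secured the polar construction is routine, but the second equality $F=(FF^{*})^{1/2}U$ requires the intertwining of the two functional calculi, which is the one place demanding genuine care.
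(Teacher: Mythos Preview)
Your proposal is correct and follows the standard route to polar decomposition for closed densely defined operators. Note, however, that the paper does not actually supply a proof for this corollary: it is introduced with the phrase ``We shall also need the following fundamental result from operator theory'' and then stated without argument, treating it as a known black-box from operator theory (in the spirit of \cite{Con90}). So there is no paper proof to compare against; your outline \emph{is} the proof the authors are implicitly invoking, namely von Neumann's theorem that $F^{*}F$ is selfadjoint with $\text{dom}(F^{*}F)$ a core for $F$, the square-root construction via the spectral theorem, the length identity $\||F|\xi\|=\|F\xi\|$, and the definition of $U$ on $\overline{\text{ran}(|F|)}$ extended by zero.
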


\section{Unitary Operators}
\label{sec:3}

Let $\mathcal{H}$ be a (complex) Hilbert space and let $(T_{1}, ..., T_{n})$
be a finite family of commuting unitary operators in $\mathcal{H}$.  We
introduce the following multi-index notation 
$k=(k_{1}, ..., k_{n}) \in \mathbb{Z}^{n}$, i.e., $k_{j} \in \mathbb{Z}$ for
$1 \leq j \leq n$, and 
\begin{equation}
\label{E:T}
  T^{k} := T_{1}^{k_{1}}T_{2}^{k_{2}} \cdots T_{n}^{k_{n}}
\end{equation}
For vectors $\psi \in \mathcal{H} \setminus \{0\}$, we set 
\begin{equation}
\label{E:psi}
  \psi_{k} := T^{k}\psi, \text{  }k \in \mathbb{Z}^{d}
\end{equation}

The closed subspace in $\mathcal{H}$ generated by the vectors 
$\{\psi_{k} | k \in \mathbb{Z}^{d}\}$ will be denoted $\mathcal{H}(\psi)$;
and it is called the cyclic subspace generated by the vector $\psi$.

For points $z=(z_{1}, z_{2}, ..., z_{n}) \in \mathbb{T}^{n}$, we shall be
using the 
$\underbrace{[0,1) \times \cdots \times [0,1)}_{n \text{  } times} = [0,1)^{n}$ 
parametrization 
\begin{equation}
\label{E:zpara}
  z=(e^{i2\pi x_{1}}, e^{i2\pi x_{2}}, ..., e^{i2\pi x_{k}}),
\end{equation}  
and identifications:
\[
  z^{k} \longleftrightarrow (k_{1}x_{1}, k_{2}x_{2}, ..., k_{n}x_{n}) 
  \text{ if } k\in \mathbb{Z}^{n};
\]
and 
\begin{equation}
\label{E:zw}
  zw \longleftrightarrow (x_{1}+y_{1}, x_{2}+y_{2}, ..., x_{n}+y_{n})
\end{equation}  
where the addition in the coordinates in (\ref{E:zw}) are addition mod $1$,
i.e., addition in the group $\mathbb{R}/\mathbb{Z}$.  Hence 
$\mathbb{T}^{n} \simeq \mathbb{R}^{n}/\mathbb{Z}^{n}$.

Our present approach to the Spectral Representation Theorem for families 
of commuting unitary operators in Hilbert space is closest to that of 
\cite{Nel69}, a set of Lecture Notes by Ed Nelson; now out of print but 
available on URL http://www.math.princeton.edu/~nelson/. The 
\textit{multiplicity function} is presented there as a complete invariant. 
Of course, in wavelet applications, there are also the additional consistency 
relations (see \cite{BMM99}), but the notion of a multiplicity function is 
general.
 
This representation theoretic approach, adapted below to our present 
applications,  fits best with how we use the Spectral Representation 
Theorem in understanding wavelets and generalized multiresolution 
analysis (GMRAs); see also \cite{Bag00, BJMP05, BMM99}.  The standard 
presentation of the Spectral Theorem in textbooks is typically different 
from the Spectral Representation Theorem, and here we spell out the 
connection; developing here a computational approach. 

\begin{lem}
\label{L:3.1}
Let $\mathcal{H}$, $T=(T_{1}, ..., T_{n})$ and $\psi$ be as described above,
and let $\mathcal{H}(\psi)$ be the cyclic subspace in $\mathcal{H}$ 
generated by $\{\psi_{k}|k \in \mathbb{Z}^{n}\}$.  Then there is a unique 
Borel measure $\mu$ on $[0,1]^{n}$ and an isometric isomorphism
\begin{equation}
\label{E:isom}
  W:L^{2}([0,1]^{n}, \mu) \to \mathcal{H}(\psi)
\end{equation}
determined by 
\begin{equation}
\label{E:czT}
  \sum_{k \in \mathbb{Z}^{n}}c_{k}z^{k} \mapsto 
  \sum_{k \in \mathbb{Z}^{n}}c_{k}T^{k}\psi
\end{equation}
on the trigonometric polynomials.
\end{lem}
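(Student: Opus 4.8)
The plan is to produce the measure $\mu$ from a positive-definite function on $\mathbb{Z}^n$ via Bochner's theorem, and then to obtain $W$ by extending the prescribed map on trigonometric polynomials by continuity. First I would define $\varphi:\mathbb{Z}^n \to \mathbb{C}$ by $\varphi(m):=\langle \psi | T^m\psi\rangle$. Using that each $T_j$ is unitary, so $(T^j)^* = T^{-j}$, and that the family commutes, I would check that $\langle \psi_j | \psi_k\rangle = \langle T^j\psi | T^k\psi\rangle = \langle \psi | T^{k-j}\psi\rangle = \varphi(k-j)$, whence for every finite sequence $(c_k)$
\[
  \Big\| \sum_k c_k \psi_k \Big\|_{\mathcal{H}}^2
  = \sum_{j,k} \overline{c_j}\, c_k\, \varphi(k-j) \ge 0 .
\]
Thus $\varphi$ is a positive-definite function on the group $\mathbb{Z}^n$.

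Second, I would invoke the Bochner--Herglotz theorem for $\mathbb{Z}^n$, whose Pontryagin dual is the torus $\mathbb{T}^n \simeq [0,1)^n$: every positive-definite function on $\mathbb{Z}^n$ is the Fourier transform of a unique positive finite Borel measure. Using the parametrization (\ref{E:zpara}), this yields a unique $\mu$ on $[0,1]^n$ with
\[
  \int_{[0,1]^n} z^m\, d\mu(z) = \varphi(m), \qquad m \in \mathbb{Z}^n .
\]
The uniqueness assertion of the lemma follows immediately, since any admissible $\mu$ must reproduce the moments $\varphi(m)$, and these are precisely the Fourier coefficients that determine a finite Borel measure on $\mathbb{T}^n$.

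Third, with $\mu$ in hand I would define $W$ on the dense subspace of trigonometric polynomials by (\ref{E:czT}) and verify isometry. Since $\overline{z^j}=z^{-j}$ on the torus, the computation
\[
  \Big\| \sum_k c_k z^k \Big\|_{L^2(\mu)}^2
  = \sum_{j,k} \overline{c_j}\, c_k \int z^{k-j}\, d\mu
  = \sum_{j,k}\overline{c_j}\,c_k\,\varphi(k-j)
  = \Big\| \sum_k c_k T^k\psi \Big\|_{\mathcal{H}}^2
\]
shows simultaneously that $W$ is well defined (two trigonometric polynomials agreeing in $L^2(\mu)$ have the same image) and norm-preserving. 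As the trigonometric polynomials are dense in $L^2(\mu)$ by Stone--Weierstrass together with finiteness of $\mu$, and as $\{T^k\psi\}$ spans a dense subspace of $\mathcal{H}(\psi)$ by definition, $W$ extends by continuity to an isometry of $L^2([0,1]^n,\mu)$ onto $\mathcal{H}(\psi)$, i.e. an isometric isomorphism.

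The step I expect to be the main obstacle is the second one: the correct invocation of the multivariate Bochner theorem and the verification that the spectral measure lives on $[0,1]^n$ with the moment normalization matching the conjugate-linear inner-product convention used here. An equivalent route I would keep in reserve constructs $\mu$ directly by a Gelfand--Naimark--Segal/Riesz argument: the functional $L(p):=\langle \psi | p(T)\psi\rangle$ on trigonometric polynomials satisfies $L(\,\overline{q}\,q\,) = \|q(T)\psi\|^2 \ge 0$, so it extends to a positive linear functional on $C(\mathbb{T}^n)$ and is represented by a unique Borel measure $\mu$. The positivity estimate there is identical to the positive-definiteness check of the first paragraph, so the two approaches coincide.
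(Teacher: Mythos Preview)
Your argument is correct. The positive-definiteness check, the appeal to Bochner--Herglotz on $\mathbb{Z}^n$ (or, equivalently, your Riesz/GNS alternative on $C(\mathbb{T}^n)$), and the density-plus-isometry extension are all sound; uniqueness follows as you say because the Fourier--Stieltjes coefficients determine a finite Borel measure on the torus.

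The paper, however, takes a genuinely different route: it invokes the Spectral Theorem for the commuting family $(T_1,\dots,T_n)$ to obtain a projection-valued measure $E$ on $\mathbb{T}^n$ with $T^k=\int e_k(x)\,E(dx)$, sets $\mu_\psi(A):=\|E(A)\psi\|^2$, defines $W_\psi(m):=m(T)\psi$ via the Borel functional calculus, and verifies isometry directly from the orthogonality relation $E(A_1)E(A_2)=E(A_1\cap A_2)$. Surjectivity onto $\mathcal{H}(\psi)$ is then obtained by a Radon--Nikodym argument rather than by density of trigonometric polynomials. Your approach is more elementary and self-contained for this lemma in isolation---Bochner's theorem is lighter machinery than the projection-valued spectral theorem. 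The paper's approach, on the other hand, buys the full measurable functional calculus $m\mapsto m(T)$ and the projection-valued measure $E$ as by-products; these are used repeatedly afterwards (for instance in the renormalization $\psi_{\mathrm{REN}}=\xi(T)\psi$ of Theorem~\ref{T:4.3} and in the matrix-valued extension of Lemma~\ref{L:3.3}), so for the paper's purposes the heavier setup pays for itself.
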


\begin{rem}
The significant part of the lemma relates to the Spectral Theorem: It is the
extension of the mapping in (\ref{E:czT}) from the trigonometric polynomials 
to the algebra of the measurable functions. 
\end{rem}

\begin{proof}
(of Lemma \ref{L:3.1})We first elaborate the formula (\ref{E:czT}).  Set
\begin{align*}
  z^{k} :&= e_{k}(x)  \\
        &= e^{i2\pi k_{1}x_{1}}e^{i2\pi k_{2}x_{2}} \cdots e^{i2\pi k_{n}x_{n}} \\
        &=e^{i2\pi k\cdot x}
\end{align*}
with $k \cdot x:=k_{1}x_{1}+ \cdots + k_{n}x_{n}$, and for finite 
summations:
\begin{equation}
\label{E:trigrep}
  \sum_{k \in \mathbb{Z}}c_{k}z^{k} = \sum_{k \in \mathbb{Z}}c_{k}e_{k}(x).
\end{equation}
This is the representation of the trigonometric polynomials as 
$\mathbb{Z}^{n}$-periodic functions.  Set
\[
  m_{c}(x) := \sum_{k \in \mathbb{Z}^{n}}c_{k}e_{k}(x)
\]
and
\begin{equation}
\label{E:mcTpsi}
  m_{c}(T)\psi := \sum_{k \in \mathbb{Z}^{n}}c_{k}T^{k}\psi.
\end{equation}
By the Spectral Theorem, this is a projection valued measure on $[0,1]^{n}$
such that
\begin{equation}
\label{E:mcE}
  m_{c}(T) = \int_{[0,1]^{n}}m_{c}(x)E(dx).
\end{equation}

The measure $E$ is defined on the Borel sets $\mathcal{B}_{n}$ in $[0,1]^{n}$:
\[
  E:\mathcal{B}_{n} \to \text{PROJ}(\mathcal{H})
\]
and 
\begin{equation}
\label{E:EBorel}
  E(A_{1} \cap A_{2}) = E(A_{1})E(A_{2}), \text{ for all }A_{1}, A_{2} \in 
  \mathcal{B}_{n}.
\end{equation}
A linear operator $E:\mathcal{H} \to \mathcal{H}$ is said to be a projection
if and only if $E=E^{*}=E^{2}$.  Note that condition (\ref{E:EBorel}) for
a projection valued measure implies that for $A_{1} \cap A_{2} = \emptyset$,
the subpsaces $E(A_{1})\mathcal{H}$ and $E(A_{2})\mathcal{H}$ are 
orthogonal.
The measure $\mu=\mu_{\psi}$ in the conclusion in the lemma is 
\begin{align*}
  \mu(A) :&= \|E(A)\psi\|^{2} \\
         &= \langle \psi | E(A)\psi \rangle_{\mathcal{H}}.
\end{align*}
Since 
\begin{align*}
\label{E:kUT}
  \mathbb{Z}^{n} \ni k \mapsto U(k) :&= T^{k} \\
                                    &= T_{1}^{k_{1}}T_{2}^{k_{2}} \cdots 
                                       T_{n}^{k_{n}}
\end{align*}
is a unitary representation, there is a projection valued measure
\[
  \mathcal{B}(\mathbb{T}^{n}) \ni A \mapsto E(A) \in \text{PROJ}(\mathcal{H}) 
\]
such that
\begin{align*}
  I_{\mathcal{H}} &= \int_{\mathbb{T}^{n}}E(dx), \text{ and} \\
             U(k) &= \int_{\mathbb{T}^{n}}e_{k}(x)E(dx) \text{, for all } 
  k \in \mathbb{Z}^{n}.
\end{align*}
This means that for every measurable function 
$m:\mathbb{T}^{n} \to \mathbb{C}$, the operator $m(T)$ may be defined by the
functional calculus
\begin{equation}
\label{E:fuctcalc}
  m(T) = \int_{\mathbb{T}^{n}}m(x)E(dx).
\end{equation}
Setting $\mu=\mu_{\psi}$ we conclude that $\mu$ is a scalar Borel measure,
i.e., 
\begin{align}
\label{E:mupsiE}
  \mu_{\psi}(A) :&= \langle \psi | E(A)\psi \rangle_{\mathcal{H}} \\
              &=\|E(A)\psi\|_{\mathcal{H}}^{2}
\end{align}
We have used that
\begin{equation}
\label{E:E(A)}
  E(A)=E(A)^{*}=E(A)^{2} \text{, for all } A \in \mathcal{B}(\mathbb{T}^{n}).
\end{equation}

\begin{lem}
\label{L:3.3a}
Let $Q:\mathcal{H} \to \mathcal{H}$ be a linear operator.  Then the 
following are equivalent:
\begin{itemize}
  \item[(a)] $Q U(k) = U(k)Q$, for all $k \in \mathbb{Z}^{n}$; and
  \item[(b)] $QE(A)=E(A)Q$, for all $A \in \mathcal{B}(\mathbb{T}^{n})$.
\end{itemize}
In summary, a bounded operator $Q$ commutes with the unitary representation 
$U$ if and only if it commutes with the spectral projections.
\end{lem}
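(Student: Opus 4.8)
The plan is to prove the two implications separately, with the reverse direction being routine and the forward direction resting on the uniqueness theorem for Fourier--Stieltjes coefficients.

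For (b) $\Rightarrow$ (a), I would argue directly from the functional calculus. Since each generator is recovered from the spectral measure as $U(k) = \int_{\mathbb{T}^{n}} e_{k}(x)\,E(dx)$, an operator $Q$ that commutes with every spectral projection $E(A)$ commutes with every $E$-integral, and in particular with each $U(k)$. Concretely, for fixed vectors one moves $Q$ past the integral against $E$, using (b) inside the integrand, to obtain $QU(k)=U(k)Q$.

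For (a) $\Rightarrow$ (b), the idea is to fix arbitrary vectors $\varphi, \eta \in \mathcal{H}$ and introduce the two complex Borel measures on $\mathbb{T}^{n}$,
\[
  \nu_{1}(A) := \langle \varphi | QE(A)\eta \rangle_{\mathcal{H}},
  \qquad
  \nu_{2}(A) := \langle \varphi | E(A)Q\eta \rangle_{\mathcal{H}}.
\]
Both are of finite total variation, since $E$ is a projection valued measure and $Q$ is bounded. I would then compute their Fourier--Stieltjes coefficients by integrating the characters $e_{k}$ against them, using $U(k)=\int e_{k}\,dE$ to get
\[
  \int_{\mathbb{T}^{n}} e_{k}\,d\nu_{1} = \langle \varphi | QU(k)\eta \rangle_{\mathcal{H}},
  \qquad
  \int_{\mathbb{T}^{n}} e_{k}\,d\nu_{2} = \langle \varphi | U(k)Q\eta \rangle_{\mathcal{H}}.
\]
Hypothesis (a) forces the two right-hand sides to coincide for every $k \in \mathbb{Z}^{n}$, so $\nu_{1}$ and $\nu_{2}$ have identical Fourier coefficients.

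The crux is to pass from equality of Fourier coefficients to equality of measures. Here I would invoke the uniqueness theorem: a finite complex Borel measure on $\mathbb{T}^{n}$ is determined by its integrals against the trigonometric polynomials. This rests on Stone--Weierstrass (the span of the $e_{k}$ is dense in $C(\mathbb{T}^{n})$) together with the Riesz representation theorem (a measure annihilating a dense subspace of $C(\mathbb{T}^{n})$ vanishes). Applying this to $\nu_{1}-\nu_{2}$ yields $\nu_{1}(A)=\nu_{2}(A)$ for all Borel $A$, i.e. $\langle \varphi | QE(A)\eta \rangle_{\mathcal{H}} = \langle \varphi | E(A)Q\eta \rangle_{\mathcal{H}}$; since $\varphi,\eta$ are arbitrary, $QE(A)=E(A)Q$. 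The main obstacle is precisely this last passage: once density and Riesz representation are in place the argument is mechanical, but one must be careful that $\nu_{1},\nu_{2}$ are genuinely of bounded variation so that the uniqueness statement applies.
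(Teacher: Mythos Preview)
Your argument is correct; the paper itself gives no proof beyond ``Left to the reader,'' so there is nothing to compare against. The route you take---Stone--Weierstrass plus Riesz representation to deduce equality of the two complex measures $\nu_1,\nu_2$ from equality of their Fourier--Stieltjes coefficients---is the standard one, and your bounded-variation remark is handled cleanly by writing $\nu_1(A)=\langle Q^{*}\varphi\,|\,E(A)\eta\rangle$.
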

\begin{proof}
Left to the reader.
\end{proof}
We are now ready to define the isometry $W=W_{\psi}$ from (\ref{E:isom}),
where
\begin{equation}
\label{E:psi1}
  W_{\psi} : L^{2}(\mathbb{T}^{n}, \mu_{\psi}) \to \mathcal{H}(\psi).
\end{equation}
Set
\begin{equation}
\label{E:psi2}
  W_{\psi}(m) := m(T)\psi
\end{equation}
with $m$ a measurable function on $\mathbb{T}^{n}$, and with $m(T)$
defined by the measurable functional calculus in (\ref{E:fuctcalc}).

The verification of the isometric property for $W_{\psi}$ is as follows:
\begin{align*}
  \|W_{\psi}(m)\|_{\mathcal{H}}^{2} &\underset{by (\ref{E:fuctcalc})}{=}
  \|m(T)\psi\|^{2} \\
  &= \left\Vert \int_{\mathbb{T}^{n}}m(x)E(dx)\psi \right\Vert_{\mathcal{H}}^{2} \\
  &= \langle \int_{\mathbb{T}^{n}}m(x)E(dx)\psi \vert \int_{\mathbb{T}^{n}}m(y)E(dy)\psi \rangle_{\mathcal{H}} \\
  &= \int_{\mathbb{T}^{n}}\int_{\mathbb{T}^{n}} \overline{m(x)}m(y)
  \langle E(dx)\psi|E(dy)\psi \rangle_{\mathcal{H}} \\
  &\underset{by (\ref{E:E(A)})}{=} \int_{\mathbb{T}^{n}}
  \int_{\mathbb{T}^{n}} \overline{m(x)}m(y)\langle \psi|E(dx)E(dy)\psi 
  \rangle_{\mathcal{H}} \\
  &\underset{by (\ref{E:EBorel})}{=} \int_{\mathbb{T}^{n}}
  |m(x)|^{2} \langle \psi|E(dx)\psi \rangle_{\mathcal{H}} \\
  &\underset{by (\ref{E:mupsiE})}{=} \int_{\mathbb{T}^{n}}|m(x)|^{2}
d\mu_{\psi}(x) \\
  &= \|m\|_{L^{2}(\mathbb{T}^{n}, \mu_{\psi})}^{2}.
\end{align*}
While the calculation is done initially for $m \in L^{\infty}(\mu_{\psi})$,
after the isometric property
\begin{equation}
\label{E:3.17}
  \|m(T)\psi\|_{\mathcal{H}}=\|m\|_{L^{2}(\mu_{\psi})}
\end{equation}
is verified, it follows that
\[
  W_{\psi}(m) := m(T)\psi
\]
is now well defined for all $m \in L^{2}(\mu_{\psi})$; and the operator
$W_{\psi}$ resulting by $L^{2}$-norm completion will be defined on all of
$L^{2}(\mu_{\psi})$.

To show that
\begin{equation}
\label{E:psiLH}
  W_{\psi}L^{2}(\mu_{\psi})= \mathcal{H}(\psi)
\end{equation}
we check that if $\xi \in \mathcal{H}(\psi)$, and 
\begin{equation}
\label{E:ximT}
  \langle \xi | m(T)\psi \rangle_{\mathcal{H}} = 0 \text{, for all } m \in 
  L^{2}(\mu_{\psi}),
\end{equation}
then $\xi =0$.

First note that for $A \in \mathcal{B}(\mathbb{T}^{n})$ we have 
\[
  |\langle \xi | E(A)\psi \rangle|^{2} \leq \|\xi\|^{2}\mu_{\psi}(A).
\]
Hence, by the Radon-Nikodym Theorem, there is a function 
$F_{\xi} \in L^{1}(\mu_{\psi})$ such that
\[
  \langle \xi | E(dx)\psi \rangle = F_{\xi}(x)d\mu_{\psi}(x),
\]
and
\begin{equation}
\label{E:intmFxi}
  \int_{\mathbb{T}}m(x)F_{\xi}(x)d\mu_{\psi}(x)=0
\end{equation}
for all functions $m$ as above.

As a result, 
\begin{equation}
\label{E:xiF}
  \xi = \overline{F}_{\xi}(T)\psi.
\end{equation}
But since (\ref{E:intmFxi}) holds for all $m$, we conclude that $F_{\xi}=0$,
$\mu_{\psi}$ a. e.

Substituting back into (\ref{E:xiF}), we conclude that $\xi=0$.  But 
$W_{\psi}$ is isometric, so its range is closed.  Since it is dense, the
desired conclusion (\ref{E:psiLH}) now follows.
\end{proof}

We now turn to the case of matrix-value measures.  The setting is as above:
$T=(T_{1}, ..., T_{n})$ a given set of commuting unitary operators acting
in a Hilbert space $\mathcal{H}$, i.e., 
\begin{equation}
\label{E:Tj}
  T_{j}:\mathcal{H} \to \mathcal{H}, \text{ } 1\leq j \leq n.
\end{equation}
The main difference is that we will be considering cyclic subspaces in 
$\mathcal{H}$ generated by a fixed finite family 
$\mathcal{F} = \{\psi_{1}, \psi_{2}, ..., \psi_{N}\}$ in 
$\mathcal{H} \setminus \{0\}$.  We let $\mathcal{H}(\mathcal{F})$ denote the 
closed span of the vectors
\begin{equation}
\label{E:setT}
  \{T^{k}\psi_{j}|k\in \mathbb{Z}^{n}, 1 \leq j \leq n\}.
\end{equation}

Let
\[
  \mathcal{B}(\mathbb{T}^{n}) \ni A \mapsto E(A) \in \text{PROJ}(\mathcal{H})
\]
be the projection valued measure introduced in (\ref{E:fuctcalc}), i.e.,
satisfying
\begin{equation}
\label{E:TintzkE}
  T^{k} = \int_{\mathbb{T}^{n}}z^{k}E(dz),
\end{equation}
or in additive notation
\begin{equation}
\label{E:TintekE}
  T^{k} = \int_{\mathbb{T}^{n}}e_{k}(x)E(dx).
\end{equation}

Setting
\begin{equation}
\label{E:Prs}
  P_{r,s}(A):= \langle \psi_{r}|E(A)\psi_{s} \rangle, \text{ for }
  1 \leq r, s \leq N, \text{  } A \in \mathcal{B}, 
\end{equation}
we note that $P(\cdot)$ is a matrix-valued measure on $\mathbb{T}^{n}$,
i.e., taking values $M_{N}^{+}$.

Specifically, let $v \in \mathbb{C}^{N}$, and 
$A \in \mathcal{B}(\mathbb{T}^{n})$.  Setting 
$\psi := \sum_{r=1}^{N}v_{r}\psi_{r}$, we get: 
\begin{align*}
  \langle v|P(A)v \rangle_{l_{N}^{2}} 
  &= \sum_{r}\sum_{s}\overline{v}_{r}\langle \psi_{r}|E(A)\psi_{s} \rangle 
  v_{s} \\
  &= \langle \sum_{r}v_{r}\psi_{r}|E(A)\sum_{s}v_{s}\psi_{s}
  \rangle_{\mathcal{H}} \\
  &= \|E(A)\psi\|_{\mathcal{H}}^{2} \geq 0,
\end{align*}
as claimed. 

\begin{defn}
\label{D:3.2}
Let $P$ be the matrix-valued measure defined on $\mathcal{B}(\mathbb{T}^{n})$
as in (\ref{E:Prs}).  The Hilbert space $L^{2}(P)$ then consists of all
measurable functions $m:\mathbb{T}^{n} \to \mathbb{C}^{N}$ such that
\begin{equation}
\label{E:mnormp}
  \|m\|_{P}^{2}:= \int_{\mathbb{T}^{n}}\langle m| P(dx)m \rangle_{l_{N}^{2}}
  < \infty
\end{equation}
where $m^{T} = (m_{1}, ..., m_{N})$, and 
\begin{equation}
\label{E:mPm}
  \langle m| P(dx)m \rangle_{l_{N}^{2}} := 
  \sum_{r}\sum_{s} \overline{m_{r}(x)}P_{r,s}(dx)m_{s}(x).
\end{equation}
\end{defn}

\begin{lem}
\label{L:3.3}
Let $\mathcal{H}$, $T=(T_{1}, ..., T_{n})$ and 
$\mathcal{F} = \{\psi_{1}, \psi_{2}, ..., \psi_{N}\}$ be as described above.
Let $P=P_{\mathcal{F}}$ be the corresponding matrix-value measure, and let
$\mathcal{H}(\mathcal{F})$ denote the (closed) cyclic subspace in 
$\mathcal{H}$ generated by 
$\{T^{k}\psi|k \in \mathbb{Z}^{n}, \psi \in \mathcal{F} \}$.  For 
$m \in L^{2}(P)$, set
\begin{equation}
\label{E:Wmsum}
  W(m) := \sum_{r=1}^{N} m_{r}(T)\psi_{r}.
\end{equation}
Then $W$ defines an isometric isomorphism
\[
  W:L^{2}(P) \to \mathcal{H}(\mathcal{F})
\]
mapping $L^{2}(P)$ onto $\mathcal{H}$.
\end{lem}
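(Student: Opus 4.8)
The plan is to follow the template of Lemma~\ref{L:3.1}, promoting every step from the scalar measure $\mu_\psi$ to the matrix-valued measure $P$, with the genuinely new work concentrated in a density step. First I would restrict $W$ to the dense subspace of \emph{trigonometric polynomial} vector functions, i.e.\ those $m=(m_1,\dots,m_N)$ each of whose components $m_r$ is a finite combination of the $e_k$. On such $m$ the right-hand side of (\ref{E:Wmsum}) is a finite linear combination $\sum_{r}\sum_{k}(m_r)_k\,T^k\psi_r$ of the generators in (\ref{E:setT}), so $W(m)$ manifestly lies in $\mathcal{H}(\mathcal{F})$; moreover, taking $m$ to have the single monomial $e_k$ in the $r$-th slot and $0$ elsewhere exhibits every generator $T^k\psi_r$ in the range of $W$. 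This last observation is what will deliver surjectivity at the end.

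Next I would verify the isometric identity $\|W(m)\|_{\mathcal{H}}^2=\|m\|_P^2$ on these trigonometric polynomials. Writing $m_r(T)=\int_{\mathbb{T}^n}m_r(x)E(dx)$ via the functional calculus (\ref{E:fuctcalc}) and expanding $\|\sum_r m_r(T)\psi_r\|^2=\sum_{r,s}\langle m_r(T)\psi_r\,|\,m_s(T)\psi_s\rangle$ as a double integral against $\langle E(dx)\psi_r\,|\,E(dy)\psi_s\rangle$, the relations (\ref{E:EBorel}) and (\ref{E:E(A)}) collapse the double integral to the diagonal exactly as in the computation following (\ref{E:3.17}); the cross terms now survive and assemble, by the definition (\ref{E:Prs}) of $P_{r,s}$, into $\sum_{r,s}\int \overline{m_r}\,P_{r,s}(dx)\,m_s=\int_{\mathbb{T}^n}\langle m\,|\,P(dx)m\rangle=\|m\|_P^2$. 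This is routine once the bookkeeping of the sum over $r,s$ is carried along.

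The main obstacle is to show that the trigonometric polynomial vectors are \emph{dense} in $L^2(P)$, so that the isometry extends to all of $L^2(P)$; this is where the matrix case departs from the scalar case, since one cannot simply argue componentwise (off-diagonal cancellation in $P$ can make a single component $m_r$ fail to be $P_{r,r}$-square-integrable even when $\|m\|_P<\infty$). I would introduce the finite scalar trace measure $\tau:=\sum_r P_{r,r}$, note $\tau(\mathbb{T}^n)=\sum_r\|\psi_r\|^2<\infty$, and invoke Radon--Nikodym (each $P_{r,s}\ll\tau$ by positivity of $P$) to write $P(dx)=K(x)\,d\tau(x)$ with $K(x)\in M_N^+$ and $\operatorname{tr}K(x)=1$, hence $0\le K(x)\le I$, $\tau$-a.e. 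The estimate $\langle m\,|\,K(x)m\rangle\le\|m(x)\|^2$ then produces a norm contraction $L^2(\tau;\mathbb{C}^N)\to L^2(P)$; truncating $m\in L^2(P)$ to $\{\|m\|\le M\}$ (dominated convergence in $\tau$) shows its image is dense, while continuous vector functions are dense in $L^2(\tau;\mathbb{C}^N)$ and trigonometric polynomials are uniformly dense in the continuous functions by Stone--Weierstrass on $\mathbb{T}^n$ (uniform convergence passing to $L^2(\tau)$ since $\tau$ is finite). Composing these yields the required density.

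Finally I would extend $W$ to an isometry on all of $L^2(P)$ by continuity into the complete space $\mathcal{H}(\mathcal{F})$, and then conclude surjectivity: being isometric, $W$ has closed range; this range contains every generator $T^k\psi_r$ from the first step, hence contains their closed span $\mathcal{H}(\mathcal{F})$, and since the range is evidently contained in $\mathcal{H}(\mathcal{F})$ the two coincide. Alternatively, surjectivity follows by the orthogonality argument of Lemma~\ref{L:3.1}: if $\xi\in\mathcal{H}(\mathcal{F})$ is orthogonal to the range of $W$, then applying Radon--Nikodym to $A\mapsto\langle\xi\,|\,E(A)\psi_r\rangle\ll P_{r,r}$ and testing against all scalar $m_r$ forces $\langle\xi\,|\,E(A)\psi_r\rangle=0$ for every $A$ and every $r$, whence $\xi\perp\mathcal{H}(\mathcal{F})$ and $\xi=0$.
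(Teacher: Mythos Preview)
Your argument is correct and follows the same skeleton as the paper's proof: the isometry computation via the projection-valued measure $E$ and the definition (\ref{E:Prs}) of $P_{r,s}$ is exactly what the paper writes out, and surjectivity is deferred to the mechanism of Lemma~\ref{L:3.1}.

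Where you go beyond the paper is in the density step. The paper's proof simply performs the isometry computation for $m\in L^{2}(P)$ and refers the rest back to Lemma~\ref{L:3.1}; it does not address the point you isolate, namely that for $N>1$ the individual terms $m_r(T)\psi_r$ in (\ref{E:Wmsum}) need not make sense separately because $\int|m_r|^2\,P_{r,r}(dx)$ is not controlled by $\|m\|_P^2$ when off-diagonal cancellation occurs. Your introduction of the trace measure $\tau=\sum_r P_{r,r}$, the Radon--Nikodym density $K(x)\in M_N^+$ with $\operatorname{tr}K=1$ (hence $0\le K\le I$), and the resulting contraction $L^{2}(\tau;\mathbb{C}^N)\to L^{2}(P)$ is the standard and correct way to close this gap; combined with truncation and Stone--Weierstrass it gives a clean proof that trigonometric vector polynomials are dense in $L^{2}(P)$, after which extension by continuity and the closed-range argument finish the job. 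This is genuinely more careful than what the paper records, but it is a refinement of the same strategy rather than a different route.
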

\begin{proof}
Except for technical modification, the proof of the lemma follows the idea
in the proof of Lemma \ref{L:3.1}, which is the special case of $N=1$.

Hence we restrict our present discussion to the verification that $W$ is 
isometric from $L^{2}(P)$ into $\mathcal{H}$.

For $m$ in $L^{2}(P)$, set $m^{T}:=(m_{1}, ..., m_{r})$, the scalar coordinate
functions.  Then
\begin{align*}
  \|Wm\|_{\mathcal{H}}^{2} 
  &= \left\Vert \sum_{r=1}^{N}m_{r}(T)\psi_{r}\right\Vert_{\mathcal{H}}^{2} \\
  &= \left\Vert \sum_{r=1}^{N} \int_{\mathbb{T}^{n}}m_{r}(x)E(dx)\psi_{r}\right\Vert_{\mathcal{H}}^{2} \\
  &= \sum_{r=1}^{N}\sum_{s=1}^{N} \int_{\mathbb{T}^{n}} \overline{m_{r}(x)}
  P_{r,s}(dx)m_{s}(x) \\
  &= \int_{\mathbb{T}^{n}} \langle m| P(dx)m \rangle \\
  &\underset{by \text{ } (\ref{E:mPm})}{=} \|m\|_{L^{2}(P)}^{2}.
\end{align*}
The arguments from Lemma \ref{L:3.1} shows that $W$ in fact maps onto 
$\mathcal{H}(\mathcal{F})$.
\end{proof}

\section{Corollaries and Applications}
\label{sec:4}

In this section we return to the main application of the general 
spectral theory developed in section \ref{sec:3}.

While the setting in section \ref{sec:3} applies, in the single variable 
case, to a general unitary operator $T$ in Hilbert space $\mathcal{H}$; 
and in the multivariable case to a finite commuting family 
$(T_{1}, T_{2}, ..., T_{n})$ of unitary operators, the main application 
is to $\mathcal{H} = L^{2}(\mathbb{R})$, or to 
$\mathcal{H} = L^{2}(\mathbb{R}^{n})$.

In the single variable case, the unitary operator $T$ will be translation 
by $1$ to the right of functions on the real line $\mathbb{R}$; and in 
the multivariable case, the system will consist of these translation 
operators but now referring to unit-translation in the $n$ coordinate 
directions.

Our first Corollary of the general formulas for the spectral 
measure/function in section \ref{sec:3} will show that, in the 
translation case, the spectral measure $\mu$ is absolutely continuous 
with respect to Lebesgue measure, and that the Radon-Nikodym derivative 
is the function introduced above in section \ref{sec:2}. 

While the results apply to the general multivariable setting, for clarity 
we will only give full details in the single variable case. But with the aid 
of section \ref{sec:2}, the reader will be able to extend the formulas 
from $n = 1$ to $n > 1$.

We stress that there is a spectral measure $\mu = \mu_{\psi}$ for every 
vector in the Hilbert space. But this section is concerned with the 
Hilbert spaces $\mathcal{H} = L^{2}(\mathbb{R}^{n})$. And so the spectral 
measure $\mu = \mu_{\psi}$ depends on which function $\psi$ is chosen in 
$\mathcal{H} = L^{2}(\mathbb{R}^{n})$.

\begin{cor}
\label{C:Tf}
Let $(Tf)(x):=f(x-1)$ be the translation in $L^{2}(\mathbb{R})$, and let
\begin{equation}
\label{E:fFF}
  \widehat{f}(t):=\int_{\mathbb{R}}e^{-i2\pi tx}f(x)dx
\end{equation}
be the $L^{2}$-Fourier transform.
Let $\psi \in L^{2}(\mathbb{R}) \setminus \{0\}$, and let $\mu := \mu_{\psi}$
be the spectral measure introduced in Lemma \ref{L:3.1}.  Then
\begin{itemize}
  \item[(i)] $\mu$ is absolutely continuous with respect to Lebesgue 
    measure $dt$ on $[0,1]$;
  \item[(ii)] The Radon-Nikodym derivative is as follows
    \begin{equation}
    \label{E:RNder}
      \frac{d\mu_{\psi}}{dt} = \sum_{n \in \mathbb{Z}}|\widehat{\psi}(t+n)|^{2}
      = \text{PER}|\widehat{\psi}|^{2}(t);
    \end{equation}
  and, in particular,
  \item[(iii)] $\text{PER}|\widehat{\psi}|^{2}$ is in $L^{1}(0,1)$. 
\end{itemize}
\end{cor}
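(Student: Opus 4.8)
The plan is to pin down the scalar measure $\mu_\psi$ by computing its Fourier--Stieltjes coefficients and matching them against those of the candidate density $\text{PER}|\widehat{\psi}|^{2}$. Recall from Lemma~\ref{L:3.1} and (\ref{E:mupsiE}) that $\mu_\psi(A)=\langle\psi\,|\,E(A)\psi\rangle$, where $E$ is the projection-valued measure of the single unitary $T$, normalized by (\ref{E:TintekE}) so that $T^{k}=\int_{[0,1]}e_{k}(x)\,E(dx)$. Integrating $e_k$ against $\mu_\psi$ therefore collapses, by the functional calculus, to an inner product:
\[
  \int_{[0,1]}e_{k}(x)\,d\mu_\psi(x)=\langle\psi\,|\,T^{k}\psi\rangle_{L^{2}(\mathbb{R})},\qquad k\in\mathbb{Z}.
\]
So the first and only substantive computation is to evaluate the right-hand side explicitly, and everything else will be matching and uniqueness.

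For that I would pass to the Fourier side, where the translation $T$ is diagonalized: a direct change of variable in (\ref{E:fFF}) gives $\widehat{T^{k}\psi}(t)=e^{-i2\pi kt}\,\widehat{\psi}(t)$, i.e.\ $T$ becomes multiplication by $e^{-i2\pi t}$. By Plancherel,
\[
  \langle\psi\,|\,T^{k}\psi\rangle=\int_{\mathbb{R}}e^{-i2\pi kt}\,|\widehat{\psi}(t)|^{2}\,dt.
\]
Now I would tile $\mathbb{R}=\bigcup_{n}[n,n+1)$, substitute $t=s+n$, and use $e^{-i2\pi kn}=1$ together with Tonelli (the summand is nonnegative) to fold the integral onto the unit interval:
\[
  \int_{\mathbb{R}}e^{-i2\pi kt}\,|\widehat{\psi}(t)|^{2}\,dt
  =\int_{0}^{1}e^{-i2\pi ks}\Big(\sum_{n\in\mathbb{Z}}|\widehat{\psi}(s+n)|^{2}\Big)ds
  =\int_{0}^{1}\overline{e_{k}(s)}\,\text{PER}|\widehat{\psi}|^{2}(s)\,ds.
\]
This exhibits every Fourier--Stieltjes coefficient of $\mu_\psi$ as the corresponding coefficient of the finite absolutely continuous measure whose density is $\text{PER}|\widehat{\psi}|^{2}$.

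To finish I would invoke uniqueness: a finite Borel measure on $\mathbb{T}\cong[0,1)$ is determined by its Fourier--Stieltjes coefficients, since the trigonometric polynomials are dense in $C(\mathbb{T})$ (Stone--Weierstrass) and $\mu_\psi$ is finite (indeed $\mu_\psi([0,1])=\|\psi\|^{2}$). Hence $\mu_\psi$ agrees with $\text{PER}|\widehat{\psi}|^{2}(t)\,dt$, which delivers (i) and (ii) at once. Part (iii) is then immediate and in fact logically independent: the Section~\ref{sec:2} periodization identity applied to $\widehat{\psi}$ gives $\int_{0}^{1}\text{PER}|\widehat{\psi}|^{2}=\int_{\mathbb{R}}|\widehat{\psi}|^{2}=\|\psi\|_{L^{2}(\mathbb{R})}^{2}<\infty$ by Plancherel, so $\text{PER}|\widehat{\psi}|^{2}\in L^{1}(0,1)$.

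The step I expect to be delicate is the bookkeeping of orientation when identifying $E$ with the Fourier-side multiplication operator. With the conventions as written, diagonalizing $(Tf)(x)=f(x-1)$ produces multiplication by $e^{-i2\pi t}$, and one must check that the induced parametrization of the spectrum by $x\in[0,1)$ matches the normalization (\ref{E:TintekE}) and not its reflection $x\mapsto 1-x$; otherwise the moment computation above pairs $e_k$ with $\overline{e_k}$ and the density emerges as $\text{PER}|\widehat{\psi}|^{2}(1-t)$ rather than $\text{PER}|\widehat{\psi}|^{2}(t)$ (the two coincide precisely when $|\widehat\psi|$ is even). Everything else --- the Plancherel identity, the Tonelli interchange, and the density of trigonometric polynomials --- is routine.
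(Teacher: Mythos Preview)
Your argument is correct. Both you and the paper diagonalize $T$ via the Fourier transform and then periodize $|\widehat\psi|^2$; the only real difference is the choice of test functions. You compute the moments $\int_{[0,1]} e_k\,d\mu_\psi=\langle\psi\mid T^k\psi\rangle$ and then invoke uniqueness of Fourier--Stieltjes coefficients (Stone--Weierstrass on $C(\mathbb{T})$, both measures being finite). The paper instead feeds $m=\chi_A$ into the Borel functional calculus and reads off
\[
\mu_\psi(A)=\|\chi_A(T)\psi\|^{2}=\int_{\mathbb{R}}\chi_A(t)\,|\widehat\psi(t)|^{2}\,dt=\int_A\text{PER}|\widehat\psi|^{2}\,dt
\]
directly for every Borel set $A$, so no uniqueness step is needed. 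The paper's route is marginally shorter for that reason; yours is a bit more self-contained in that it never uses the measurable functional calculus beyond the defining relation (\ref{E:TintekE}). Your closing caveat about the orientation bookkeeping (the spectral parameter $x$ and the Fourier variable $t$ differ by $t\mapsto -t$ under the stated conventions) is well observed and in fact applies equally to the paper's step $\|\chi_A(T)\psi\|^{2}=\int_{\mathbb{R}}\chi_A(t)|\widehat\psi(t)|^{2}\,dt$.
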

\begin{proof}
Recall the functional calculus defined for the Borel functions $m$ on 
$[0,1) \simeq \mathbb{R}/\mathbb{Z}$ introduced in Lemma \ref{L:3.3} by
extending the following formula
\begin{equation}
\label{E:pser}
  p(z)=\sum_{k}c_{k}z^{k} \mapsto \sum_{k}c_{k}T^{k}
\end{equation}
defined initially on the polynomials where the following notation and 
identification is used: $p(t):=p(e^{i2\pi t})$, and 
\[
  (T^{k}f)(x) = \underbrace{T \circ \cdots \circ T}_{k \text{ } times}f(x)
              = f(x-k) \text{ for } x \in \mathbb{R}, \text{ and }
              k \in \mathbb{Z}.
\]

If $m$ is a Borel function on $[0,1)$ we let $m(T)$ denote the Borel 
functional calculus which arises from the extension of (\ref{E:pser})
via Lemma \ref{L:3.1}: If $A \in \mathcal{B}([0,1])$, we will use this
for the function $m:=\chi_{A}$.  In particular,  $\chi_{A}$ will 
automatically be extended from $[0,1]$ to $\mathbb{R}$ by periodicity.

As a result 
\begin{align*}
  \int_{A} \text{PER}(|\widehat{\psi}|^{2})dt 
  &= \int_{0}^{1}\chi_{A}(t) \text{PER}|\widehat{\psi}|^{2}(t)dt \\
  &= \int_{-\infty}^{\infty}\chi_{A}(t) |\widehat{\psi}(t)|^{2}dt \\
  &= \|\chi_{A}(T)\psi\|^{2} \\
  &= \mu_{\psi}(A).  
\end{align*}
This proves that $d\mu_{\psi}$ is absolutely continuous with respect to 
$dt$ on $[0,1]$, and that the formula (\ref{E:RNder}) holds for the 
Radon-Nikodym derivative $\frac{d\mu_{\psi}}{dt}$.  Since 
$\frac{d\mu_{\psi}}{dt} \in L^{1}(0,1)$ by the Radon-Nikodym theorem, we 
conclude that the almost everywhere defined function 
$\text{PER}|\widehat{\psi}|^{2}$ is well defined, and in $L^{1}(0,1)$.
\end{proof}

We now turn to several generalizations, including the multivariable case, 
and some cases of unitary operators $T$ different from the translation 
operator in $L^{2}(\mathbb{R})$. There is an additional question which is 
motivated by applications, and which we will answer concern conditions for 
when the Radon-Nikodym derivative of the spectral measure 
$\mu = \mu_{\psi}$ is in $L^{2}$, and in $L^{\infty}$. 

We now turn to a variety of structural properties that may hold for a 
bilateral sequence of vectors obtained by the application of powers of a 
single unitary operator to a fixed non-zero vector $\psi$ in a Hilbert 
space. These are conditions which arise in harmonic analysis \cite{Chr03}, 
in wavelets \cite{BMM99, Bag00, BJMP05}, and in signal processing 
\cite{CKS06}, and they are denoted by the name ``frame." But there is a 
host of distinct frame conditions, and our next result shows that they are 
determined by specific properties of our associated measure 
$\mu_{\psi}$. 

In addition, we show that under certain conditions, the vector $\psi$ may 
be replaced by a renormalized version which has the effect of turning the 
spectral Radon-Nikodym derivative into the indicator function for a certain 
Borel set.

\begin{defn}
\label{D:4.2}
Let $T:\mathcal{H} \to \mathcal{H}$ be a unitary operator, and let 
$\psi \in \mathcal{H}\setminus \{0\}$.  For $k \in \mathbb{Z}$, set 
\begin{equation}
\label{E:psiT}
  \psi_{k} := T^{k}\psi.
\end{equation}
Let $\mathcal{H}(\psi)$ be the \textit{cyclic subspace} generated by $\psi$, 
i.e., the closed space of the family $\{\psi_{k}|k \in \mathbb{Z}\}$; 
see Lemma \ref{L:3.1}.

We shall be concerned with the following properties for the sequence of 
vectors $\psi_{k}$, $k \in \mathbb{Z}$:
\begin{itemize}
  \item ONB:  
    \begin{equation}
    \label{E:4.5} 
      \langle \psi_{j} | \psi_{k} \rangle_{\mathcal{H}} = 
      \delta_{j,k}, \text{ for all } j, k \in \mathbb{Z}; 
    \end{equation}
    i.e., $\{\psi_{j}\}$ is an orthonormal basis in $\mathcal{H}(\psi)$.
  \item \textit{Parseval}:
    \begin{equation}
    \label{E:4.6} 
      \sum_{j \in \mathbb{Z}} |\langle \psi_{j}|f \rangle|^{2} = 
      \|f\|_{\mathcal{H}}^{2}, \text{ for all } \mathcal{H}(\psi);
    \end{equation}
    i.e., $\{\psi_{j}\}$ is a Parseval frame in $\mathcal{H}(\psi)$.
  \item \textit{Bessel}: \\
    There exists $B \in \mathbb{R}_{+}$ such that 
    \begin{equation}
    \label{E:4.7} 
      \sum_{j \in \mathbb{Z}} |\langle \psi_{j}|f \rangle|^{2} \leq
      B\|f\|_{\mathcal{H}}^{2}, \text{ for all } f \in \mathcal{H}(\psi);  
    \end{equation}
    i.e., $\{\psi_{j}\}$ is a Bessel frame in $\mathcal{H}(\psi)$.
  \item \textit{Frame}: \\
    There exists $A, B \in \mathbb{R}_{+}$, $A \leq B < \infty$ such that 
    \begin{equation}
    \label{E:4.8} 
      A\|f\|_{\mathcal{H}}^{2} \leq 
      \sum_{j \in \mathbb{Z}} |\langle \psi_{j}|f \rangle|^{2} \leq
      B\|f\|_{\mathcal{H}}^{2}, \text{ for all } f \in \mathcal{H}(\psi); 
    \end{equation}
    i.e., $\{\psi_{j}\}$ is a frame in $\mathcal{H}(\psi)$.
  \item \textit{Riesz}:
    There exists $A, B \in \mathbb{R}_{+}$, $A \leq B < \infty$ such that 
    \begin{equation}
    \label{E:4.9} 
      A\|c\|_{l^{2}}^{2} \leq 
      \left\Vert \sum_{k \in \mathbb{Z}} c_{k}\psi_{k}\right\Vert_{\mathcal{H}}^{2} \leq
      B\|c\|_{l^{2}}^{2}, \text{ for all } c=(c_{j}) \in l^{2}(\mathbb{Z}); 
    \end{equation}
    i.e., $\{\psi_{j}\}$ is a Riesz basis (with bounds $(A,B)$) for 
    $\mathcal{H}(\psi)$.  A part of the definition is the convergence of 
    $\sum_{k \in \mathbb{Z}} c_{k}\psi_{k}$ in $\mathcal{H}$.
  \item \textit{Absolute Continuity}:
    We say that the measure $\mu_{\psi}$ is absolutely continuous if 
    Radon-Nikodym derivative
    \begin{equation}
    \label{E:4.11}
      \frac{d\mu_{\psi}}{dx}=p_{\psi} \in L^{1}(0,1)
    \end{equation}
    exists, where by $dx$, we mean the restriction to $[0,1]$ of Lebesgue
    measure.  It will be convenient to use the isometric isomorphism
    $L^{2}(0,1) \to L^{2}(\mathbb{T})$ defined by: 
    $f(x) \longleftrightarrow F(e_{1}(x))$ where $e_{1}(x) := e^{i2\pi x}$.
\end{itemize}
\end{defn}

The next result yields a gradation of the conditions in Definition 
\ref{D:4.2}.  In its statement we will make use of the notation 
\textit{esssupp} for essential support, and \textit{esssup} for essential 
supremum.  The abbreviation a.e. will be for almost every.  

\begin{thm}
\label{T:4.3}
We have the following implications: 
(\ref{E:4.7}) Bessel $\Longrightarrow$ (\ref{E:4.11}) Absolute Continuity 
$\Longrightarrow$ \textit{Renormalization}.

By the last condition(\textit{renormalizaton}) we mean this: There is a 
vector $\psi_{\text{REN}} \in \mathcal{H}$, and a Borel subset 
$A \subset [0,1]$ such that the following three conditions (a) - (c) hold:
\begin{itemize}
  \item[(a)] $\mathcal{H}(\psi) = \mathcal{H}(\psi_{\text{REN}})$, and
  \item[(b)] $d\mu_{\psi_{\text{REN}}}(x) = \chi_{A}(x)dx$, and
  \item[(c)] $A = \text{ess supp}(p_{\psi})$.
\end{itemize}
\end{thm}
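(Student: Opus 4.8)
The plan is to pull everything back to the spectral model $L^{2}(\mu_{\psi})$ furnished by the isometric isomorphism $W_{\psi}$ of Lemma~\ref{L:3.1}, under which $\psi_{k}=T^{k}\psi$ corresponds to the exponential $e_{k}$ and $\psi$ itself to the constant function $1$. Both implications then become concrete statements about the scalar measure $\mu_{\psi}$ on $[0,1]$, and I would verify them in that picture.

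For the first implication (Bessel $\Rightarrow$ Absolute Continuity), the point I would exploit is that it suffices to test the inequality (\ref{E:4.7}) on the single vector $f=\psi$. Since $W_{\psi}$ preserves inner products and $\mu_{\psi}$ is finite (so $1\in L^{2}(\mu_{\psi})$), one has
\[
  \langle \psi_{k} \mid \psi\rangle_{\mathcal{H}}
  = \langle e_{k} \mid 1\rangle_{L^{2}(\mu_{\psi})}
  = \int_{[0,1]} e^{-i2\pi k x}\,d\mu_{\psi}(x)
  = \widehat{\mu_{\psi}}(k).
\]
Bessel applied to $f=\psi$ then reads $\sum_{k}|\widehat{\mu_{\psi}}(k)|^{2}\le B\|\psi\|^{2}<\infty$, i.e. the Fourier coefficients of $\mu_{\psi}$ lie in $l^{2}(\mathbb{Z})$. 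Setting $p_{\psi}:=\sum_{k}\widehat{\mu_{\psi}}(k)\,e_{k}\in L^{2}(0,1)$, the two finite measures $\mu_{\psi}$ and $p_{\psi}\,dx$ have identical Fourier coefficients and hence coincide; positivity of $\mu_{\psi}$ forces $p_{\psi}\ge 0$ a.e. This is exactly (\ref{E:4.11}), with the bonus that $p_{\psi}\in L^{2}(0,1)\subset L^{1}(0,1)$.

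For the second implication (Absolute Continuity $\Rightarrow$ Renormalization), I would set $A:=\operatorname{ess\,supp}(p_{\psi})$ and $m_{0}:=p_{\psi}^{-1/2}\chi_{A}$, observing that $m_{0}\in L^{2}(\mu_{\psi})$ since $\|m_{0}\|_{L^{2}(\mu_{\psi})}^{2}=\int_{A}p_{\psi}^{-1}p_{\psi}\,dx=|A|$, and then define $\psi_{\text{REN}}:=W_{\psi}(m_{0})=m_{0}(T)\psi\in\mathcal{H}(\psi)$. Conditions (b) and (c) then fall out directly: because $E(B)=\chi_{B}(T)$ and $W_{\psi}$ is isometric,
\[
  \mu_{\psi_{\text{REN}}}(B)
  = \|E(B)\psi_{\text{REN}}\|_{\mathcal{H}}^{2}
  = \|\chi_{B}\,m_{0}\|_{L^{2}(\mu_{\psi})}^{2}
  = \int_{B}\chi_{A}(x)\,dx,
\]
so $d\mu_{\psi_{\text{REN}}}(x)=\chi_{A}(x)\,dx$, giving (b), while (c) holds by the definition of $A$.

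The remaining point, and the one I expect to require the most care, is (a): $\mathcal{H}(\psi)=\mathcal{H}(\psi_{\text{REN}})$. As $\psi_{\text{REN}}\in\mathcal{H}(\psi)$, only $\supseteq$ is at issue, and by Lemma~\ref{L:3.1} it reduces to showing $\overline{\operatorname{span}}\{e_{k}m_{0}:k\in\mathbb{Z}\}=L^{2}(\mu_{\psi})$. Here I would use the unitary map $L^{2}(\mu_{\psi})=L^{2}(A,p_{\psi}\,dx)\to L^{2}(A,dx)$, $g\mapsto g\sqrt{p_{\psi}}$, under which $e_{k}m_{0}\mapsto e_{k}\chi_{A}=e_{k}|_{A}$; completeness then reduces to the density of $\{e_{k}|_{A}\}$ in $L^{2}(A,dx)$, which holds because $\{e_{k}\}$ is an orthonormal basis of $L^{2}(0,1)$ (any $h\in L^{2}(A)$ orthogonal to every $e_{k}|_{A}$, extended by zero, is orthogonal to every $e_{k}$ in $L^{2}(0,1)$, hence vanishes). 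The delicate feature throughout is that $m_{0}$ carries the factor $p_{\psi}^{-1/2}$, which is unbounded where $p_{\psi}$ is small; this is harmless precisely because every norm is measured against $d\mu_{\psi}=p_{\psi}\,dx$, and tracking that cancellation is what secures both $m_{0}\in L^{2}(\mu_{\psi})$ and the surjectivity in (a).
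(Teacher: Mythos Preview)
Your argument is correct. For the second implication (Absolute Continuity $\Rightarrow$ Renormalization) you follow essentially the paper's construction: the same renormalized vector $\psi_{\text{REN}}=\xi(T)\psi$ with $\xi=\chi_{A}\,p_{\psi}^{-1/2}$, the same computation for (b) and (c), and indeed you supply considerably more detail for (a) than the paper, which simply says it ``amounts to a third application of Lemma~\ref{L:3.1}.'' Your reduction of (a) to the density of $\{e_{k}|_{A}\}$ in $L^{2}(A,dx)$ via the unitary $g\mapsto g\sqrt{p_{\psi}}$ is a clean way to make that sentence precise.

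For the first implication your route is more economical than the paper's. You test the Bessel inequality only at the single vector $f=\psi$, read off $(\widehat{\mu_{\psi}}(k))\in l^{2}$, and identify $\mu_{\psi}$ with $p_{\psi}\,dx$ for some $p_{\psi}\in L^{2}(0,1)$. The paper instead tests at every $f=m(T)\psi$; this of course also gives absolute continuity (your argument is the case $m\equiv 1$), but substituting the density $p_{\psi}$ back into the Bessel inequality for general $m$ yields the sharper conclusion
\[
  \int_{0}^{1}|m|^{2}p_{\psi}^{2}\,dx \le B\int_{0}^{1}|m|^{2}p_{\psi}\,dx,
\]
i.e.\ $\|p_{\psi}\|_{\infty}\le B$. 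That $L^{\infty}$ bound is not needed for the renormalization step---your observation that $\|m_{0}\|_{L^{2}(\mu_{\psi})}^{2}=|A|<\infty$ shows absolute continuity alone suffices---but it is the content of the Bessel clause of Corollary~\ref{C:4.4} and is used later in the paper. So your streamlined argument proves exactly what Theorem~\ref{T:4.3} asserts, while the paper's argument along the way harvests an additional fact it will reuse.
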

\begin{proof}
Assume (\ref{E:4.7}).  Let $f \in \mathcal{H}(\psi)$ have the form 
$f = m(T)\psi$, see (\ref{E:3.17}), as in the proof of Lemma \ref{L:3.1}.
We then have
\begin{equation}
\label{E:4.12}
  \sum_{k \in \mathbb{Z}} |\langle \psi_{k} | f \rangle|^{2}
  =\sum_{k \in \mathbb{Z}}\left\vert\int_{0}^{1}\overline{e_{k}(x)}m(x)d\mu_{\psi}(x)\right\vert^{2}.
\end{equation}
Using (\ref{E:4.7}), we conclude that the Fourier coefficients of the measure
$m(x)d\mu_{\psi}(x)$ are in $l^{2}(\mathbb{Z})$.  Since this holds for all
$m$, it follows that $d\mu_{\psi}$ has the form
\begin{equation}
\label{E:4.13}
  d\mu_{\psi}(x) = p_{\psi}(x)dx
\end{equation}
where $p_{\psi} \in L^{1}(0,1)$ is the Radon-Nikodym derivative.  This is
condition (\ref{E:4.11}) which was asserted.  

Now substituting back into (\ref{E:4.12}) and (\ref{E:4.7}), we get
\begin{equation}
\label{E:4.14}
  \int_{0}^{1}|m(x)p_{\psi}(x)|^{2}dx \leq B\int_{0}^{1}|m(x)|^{2}p_{\psi}(x)dx
\end{equation}
Rewriting this as 
\[
  \int_{0}^{1}|\sqrt{p_{\psi}}m|^{2}p_{\psi}(x)dx \leq B\int_{0}^{1}|m|^{2}p_{\psi}dx
\]
we see that (\ref{E:4.7}) is equivalent to the boundedness of the following 
multiplication operator
\begin{equation}
\label{E:4.15}
  Q_{\psi}: m \longmapsto \sqrt{p_{\psi}}m
\end{equation}
in the Hilbert space $L^{2}([0,1], p_{\psi}dx)$, i.e., to estimate
\begin{equation}
\label{E:4.16}
  \|Q_{\psi}m\|_{L^{2}(p_{\psi})}^{2} \leq  B\|m\|_{L^{2}(p_{\psi})}^{2} 
  \text{, for all } m.
\end{equation}

Hence the function $p_{\psi}$ is essentially bounded (relative to Lebesgue 
measure), and $\text{esssup}(p_{\psi}) = \|p_{\psi}\|_{\infty} \leq B$.
Let $A=A_{\psi}:=\text{esssupp}(p_{\psi})=$ the essential support of 
$p_{\psi}$.  Then the identities
\begin{equation}
\label{E:4.17}
  p_{\psi}\chi_{A} = p_{\psi}
\end{equation}
and
\begin{equation}
\label{E:4.18}
  p_{\psi}\chi_{[0,1]\setminus A} = 0
\end{equation}
hold almost everywhere on $[0,1]$.

Using Lemma \ref{L:3.1}, and setting
\begin{equation}
\label{E:4.19}
  \xi(x) := \chi_{A_{\psi}}(x)p_{\psi}(x)^{-1/2},
\end{equation}
we note that the functional calculus applied to $T$ yields a vector
\begin{equation}
\label{E:4.20}
   \psi_{\text{REN}} := \xi(T)\psi
\end{equation}
well defined in $\mathcal{H}$.

We now apply the functional calculus of $T$ to the modified vector, i.e., to
$\psi_{\text{REN}}$, and by Lemma \ref{L:3.1}, we conclude that
\begin{align*}
  \|m(T)\psi_{\text{REN}}\|_{\mathcal{H}}^{2} 
  &= \|m\xi(T)\psi\|_{\mathcal{H}}^{2}  \\
  &\underset{by (\ref{E:3.17})}{=} \int_{0}^{1}|m\xi|^{2}p_{\psi}dx \\
  &\underset{by (\ref{E:4.17})}{=} \int_{A}|m|^{2}dx.
\end{align*}
holds for all $m$.  Conclusions (b) and (c) now follow.  Conclusion (a)
amounts to a third application of Lemma \ref{L:3.1} to $\psi_{\text{REN}}$.
\end{proof}

Some of the conclusions in the next corollary are folklore, but we 
include them for completeness.  Moreover, they arise as special cases 
of our more general theorems.

\begin{cor}
\label{C:4.4}
Let $T: \mathcal{H} \to \mathcal{H}$ be a unitary operator, and let 
$\psi \in \mathcal{H} \setminus \{0\}$ be given.  Suppose the measure
$\mu_{\psi}$ is absolutely continuous, and let 
$d\mu_{\psi}(x) = p_{\psi}(x)dx$ with $p_{\psi}$ denoting the Radon-Nikodym 
derivative.

Then for the five frame properties in definition \ref{D:4.2} concerning 
the subspace $\mathcal{H}(\psi)$ we have the following characterizations:

\begin{itemize}
  \item ONB (\ref{E:4.5}): $p_{\psi} \equiv 1$, i.e., $p_{\psi}(x) = 1$
  for almost every $x \in [0,1]$. \\
  \item \textit{Parseval} (\ref{E:4.6}): There exists 
  $S \in \mathcal{B}([0,1])$, $|S|>0$ (referring to Lebesgue measure) 
  such that $p_{\psi}(x) = \chi_{S}(x)$, $x \in [0,1]$. \\    
  \item \textit{Bessel} (\ref{E:4.7}): $p_{\psi} \in L^{\infty}(0,1)$ and 
  $\|p_{\psi}\|_{\infty} \leq B$. \\
  \item \textit{Frame} $(A,B)$ (\ref{E:4.8}): The estimate 
  $A \leq p_{\psi} \leq B$ a.e. on esssupp $(p_{\psi})$, i.e.,  \\
  holds on esssupp$(p_{\psi})$ where esssupp stands for the essential 
  support of $x \to p_{\psi}(x)$ on $[0,1]$.  \\
  \item \textit{Riesz} (\ref{E:4.9}): The estimate 
  $A \leq p_{\psi} \leq B$ holds for almost every $x \in [0,1]$.   
\end{itemize}
\end{cor}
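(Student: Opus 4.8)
The plan is to push every one of the five frame conditions through the isometric isomorphism $W_{\psi}: L^{2}(\mu_{\psi}) \to \mathcal{H}(\psi)$ of Lemma \ref{L:3.1}, under which $\psi_{k}=T^{k}\psi$ corresponds to the exponential $e_{k}$ and, by absolute continuity, $L^{2}(\mu_{\psi}) = L^{2}([0,1], p_{\psi}\,dx)$. Two elementary identities drive everything. First, for the inner products,
\[
  \langle \psi_{j} | \psi_{k} \rangle_{\mathcal{H}} = \langle e_{j} | e_{k} \rangle_{L^{2}(\mu_{\psi})} = \int_{0}^{1} e^{i2\pi(k-j)x}\, p_{\psi}(x)\,dx,
\]
so these are exactly the Fourier coefficients of $p_{\psi}$. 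Second, writing a general $f \in \mathcal{H}(\psi)$ as $f = W_{\psi}(m) = m(T)\psi$ with $m \in L^{2}(\mu_{\psi})$, one gets
\[
  \langle \psi_{k} | f \rangle_{\mathcal{H}} = \langle e_{k} | m \rangle_{L^{2}(\mu_{\psi})} = \int_{0}^{1} \overline{e_{k}(x)}\, m(x)\, p_{\psi}(x)\,dx,
\]
the $k$-th Fourier coefficient of $m\,p_{\psi}$. Since $\mu_{\psi}([0,1]) = \|\psi\|^{2} < \infty$, we have $m \in L^{2}(\mu_{\psi}) \subset L^{1}(\mu_{\psi})$, so $m\,p_{\psi} \in L^{1}(0,1)$ and these coefficients are well defined.

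For the four analysis-operator conditions (ONB, Parseval, Bessel, Frame) I would compute the frame operator in the $L^{2}(\mu_{\psi})$-model. By the Parseval identity (\ref{E:ckl}) in $L^{2}(0,1)$,
\[
  \sum_{k \in \mathbb{Z}} |\langle \psi_{k} | f \rangle|^{2} = \| m\,p_{\psi} \|_{L^{2}(0,1)}^{2} = \int_{0}^{1} |m(x)|^{2}\, p_{\psi}(x)^{2}\,dx = \langle m | p_{\psi}\, m \rangle_{L^{2}(\mu_{\psi})},
\]
while $\|f\|_{\mathcal{H}}^{2} = \int_{0}^{1} |m|^{2} p_{\psi}\,dx = \|m\|_{L^{2}(\mu_{\psi})}^{2}$. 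Thus the frame operator is simply multiplication by $p_{\psi}$ on $L^{2}(\mu_{\psi})$, and each condition reads off as a spectral bound on this multiplication operator, interpreted $\mu_{\psi}$-almost everywhere, i.e.\ almost everywhere on $\text{esssupp}(p_{\psi})$: Bessel (\ref{E:4.7}) is $p_{\psi} \leq B$, Frame (\ref{E:4.8}) is $A \leq p_{\psi} \leq B$, and Parseval (\ref{E:4.6}) forces $p_{\psi}^{2} = p_{\psi}$, hence $p_{\psi} = \chi_{S}$ with $S = \text{esssupp}(p_{\psi})$ (and $|S| > 0$ because $\psi \neq 0$ gives $\mu_{\psi} \neq 0$). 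The ONB case (\ref{E:4.5}) I would treat directly from the first identity: $\langle \psi_{j} | \psi_{k} \rangle = \delta_{j,k}$ says every Fourier coefficient of $p_{\psi}$ vanishes except the zeroth, which equals $1$, so $p_{\psi} \equiv 1$ by uniqueness of Fourier coefficients. The Riesz condition (\ref{E:4.9}) I would instead handle on the synthesis side: for $c \in l^{2}(\mathbb{Z})$, $\sum_{k} c_{k} \psi_{k} = W_{\psi}(m_{c})$ with $m_{c} = \sum_{k} c_{k} e_{k} \in L^{2}(0,1)$, so that $\| \sum_{k} c_{k} \psi_{k} \|_{\mathcal{H}}^{2} = \int_{0}^{1} |m_{c}|^{2} p_{\psi}\,dx$ and $\|c\|_{l^{2}}^{2} = \int_{0}^{1} |m_{c}|^{2}\,dx$; as $c$ ranges over $l^{2}(\mathbb{Z})$ the function $m_{c}$ ranges over all of $L^{2}(0,1)$, so (\ref{E:4.9}) is equivalent to $A \leq p_{\psi} \leq B$ Lebesgue-almost everywhere on the whole interval $[0,1]$.

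The main point to watch — and the conceptual crux separating the two halves — is the difference between ``on $\text{esssupp}(p_{\psi})$'' and ``everywhere on $[0,1]$''. On the analysis side the relevant measure is $\mu_{\psi} = p_{\psi}\,dx$, which is blind to the null set $\{p_{\psi} = 0\}$, so the bounds are only constrained $\mu_{\psi}$-a.e.; this is why the Frame characterization lives on the essential support. On the synthesis side the comparison is against Lebesgue measure $dx$ on all of $[0,1]$, forcing the Riesz bounds everywhere (in particular $p_{\psi} \geq A > 0$ a.e., so the support is full). The only analytic care needed is in the Parseval-identity step: a priori $m\,p_{\psi}$ is merely $L^{1}$, and to apply (\ref{E:ckl}) I must know $m\,p_{\psi} \in L^{2}(0,1)$; this is supplied by the upper (Bessel) bound, under which $\int |m|^{2} p_{\psi}^{2} \leq \|p_{\psi}\|_{\infty} \int |m|^{2} p_{\psi} < \infty$, so the manipulation is legitimate precisely in the regime where each of these conditions is being tested.
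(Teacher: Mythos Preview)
Your proposal is correct and follows essentially the same route as the paper: both push the frame conditions through the isometry $W_{\psi}$ of Lemma~\ref{L:3.1}, compute $\sum_{k}|\langle\psi_{k}|f\rangle|^{2}=\int_{0}^{1}|m|^{2}p_{\psi}^{2}\,dx$ via Parseval, and read off each property as a bound on the multiplication operator by $p_{\psi}$ (the paper does this in the proof of Theorem~\ref{T:4.3} and then handles only the Parseval case explicitly here, exactly as you do). Your treatment is in fact more complete than the paper's---you spell out the ONB case via Fourier coefficients, the Riesz case on the synthesis side, and the $L^{1}$-versus-$L^{2}$ issue for $mp_{\psi}$---and you make explicit the esssupp-versus-everywhere distinction that the paper relegates to Remark~\ref{R:4.5}.
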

  
\begin{rem}
\label{R:4.5}
So the distinction between the two conditions (\ref{E:4.8}) and 
(\ref{E:4.8}) is the question of whether the pointwise estimates are 
assumed only on the essential support, or everywhere; of course 
excepting Lebesgue measure $0$.  
\end{rem}

\begin{proof}
The details of proofs are contained in the previous discussion except for 
the necessity of the condition stated in (\ref{E:4.6}) above.

Suppose the Parseval indentity (\ref{E:4.6}) holds on $\mathcal{H}(\psi)$.  
Substituting $f = m(T)\psi$ into (\ref{E:4.6}) we get the following 
identity 
\[
  \int_{0}^{1}|m(x)|^{2}p_{\psi}(x)^{2}dx=\int_{0}^{1}|m(x)|^{2}p_{\psi}(x)dx
  \text{, for all } m.
\]
As a result $p_{\psi}(x)^{2}=p_{\psi}(x)$ for almost every $x \in [0,1]$ or
$p_{\psi}(x)(p_{\psi}(x)-1)=0$, almost every $x \in [0,1]$.
Since $\psi \neq 0$ in $\mathcal{H}$, Lemma \ref{L:3.1} shows that if 
$S = S_{\psi}$ denotes the essential support of $p_{\psi}$, then 
$|S_{\psi}| > 0$, and
\begin{equation}
\label{E:4.21}
  p_{\psi}(x)=\chi_{S_{\psi}}(x) \text{ for almost every } x \in [0,1],
\end{equation}
as claimed.
\end{proof}

\begin{cor}
\label{C:4.5}
Let $T: \mathcal{H} \to \mathcal{H}$ be a unitary operator, 
$\psi \in \mathcal{H} \setminus \{0\}$, and $d\mu_{\psi}(x)=p_{\psi}(x)dx$ as 
above.  We assume that $p_{\psi} \in L^{\infty}(0,1)$ with Bessel bound 
$B(<\infty)$. 

Then the infinite series $\sum_{k \in \mathbb{Z}} c_{k}\psi_{k}$ is well 
defined and norm-convergent in $\mathcal{H}$ for all 
$c=(c_{k})_{k \in \mathbb{Z}} \in l^{2}(\mathbb{Z})$.  Moreover,
\begin{equation}
\label{E:4.22}
  \sum_{k \in \mathbb{Z}} c_{k}\psi_{k} = 0 \text{, for some } 
  c \in l^{2} \setminus (0)
\end{equation}
if and only if there is a 
\begin{equation}
\label{E:4.23}
  E \in \mathcal{B}([0,1]) \text{ such that } |E|>0, \text{ and }
  p_{\psi}=0 \text{ on } E.
\end{equation}
\end{cor}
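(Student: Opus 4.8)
The plan is to transport everything through the isometric isomorphism $W_{\psi}:L^{2}(\mu_{\psi}) \to \mathcal{H}(\psi)$ from Lemma \ref{L:3.1}, under which $\psi_{k}=T^{k}\psi=e_{k}(T)\psi=W_{\psi}(e_{k})$. First I would record the elementary norm comparison: for a finite sequence $c$, writing $m_{c}(x):=\sum_{k}c_{k}e_{k}(x)$, the Parseval identity (\ref{E:ckl}) gives $\|m_{c}\|_{L^{2}(0,1)}^{2}=\|c\|_{l^{2}}^{2}$, while $d\mu_{\psi}=p_{\psi}dx$ with $p_{\psi}\in L^{\infty}$ yields
\[
  \|m_{c}\|_{L^{2}(\mu_{\psi})}^{2}=\int_{0}^{1}|m_{c}(x)|^{2}p_{\psi}(x)\,dx \leq \|p_{\psi}\|_{\infty}\,\|c\|_{l^{2}}^{2} \leq B\,\|c\|_{l^{2}}^{2}.
\]
Since $c\mapsto m_{c}$ is an isometry of $l^{2}(\mathbb{Z})$ onto $L^{2}(0,1)$, and the $L^{2}(\mu_{\psi})$-norm is dominated by $\sqrt{B}$ times the $L^{2}(0,1)$-norm, every $c\in l^{2}(\mathbb{Z})$ produces a genuine function $m_{c}\in L^{2}(\mu_{\psi})$, and the partial sums of $\sum_{k}c_{k}e_{k}$ converge to $m_{c}$ in $L^{2}(\mu_{\psi})$. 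Applying the isometry $W_{\psi}$ then shows that $\sum_{k}c_{k}\psi_{k}=W_{\psi}(m_{c})$ is well defined and norm-convergent in $\mathcal{H}$, which settles the first assertion.

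For the dependency statement I would invoke injectivity of $W_{\psi}$: the relation $\sum_{k}c_{k}\psi_{k}=0$ is equivalent to $W_{\psi}(m_{c})=0$, hence to $\|m_{c}\|_{L^{2}(\mu_{\psi})}=0$, i.e.
\[
  \int_{0}^{1}|m_{c}(x)|^{2}p_{\psi}(x)\,dx=0,
\]
which says precisely that $m_{c}=0$ Lebesgue-a.e. on the set $\{p_{\psi}>0\}$. The crux is to observe that a nonzero $c\in l^{2}$ corresponds (again via the $l^{2}\to L^{2}(0,1)$ isometry $c\mapsto m_{c}$) to a nonzero element $m_{c}\in L^{2}(0,1)$; so the existence of a nontrivial relation is equivalent to the existence of a nonzero $L^{2}(0,1)$-function vanishing a.e. off $\{p_{\psi}=0\}$.

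Finally I would close the equivalence with the zero set of $p_{\psi}$. If $E:=\{x\in[0,1]:p_{\psi}(x)=0\}$ has $|E|>0$, I take $g:=\chi_{E}$; its Fourier coefficients $c_{k}:=\int_{0}^{1}\overline{e_{k}(x)}g(x)\,dx$ form a nonzero element of $l^{2}(\mathbb{Z})$ with $m_{c}=g$, and then $m_{c}p_{\psi}=\chi_{E}p_{\psi}=0$ a.e., so $\sum_{k}c_{k}\psi_{k}=W_{\psi}(m_{c})=0$, giving (\ref{E:4.22}). Conversely, a nontrivial relation forces $\{m_{c}\neq 0\}$ to have positive Lebesgue measure (since $m_{c}\neq 0$ in $L^{2}(0,1)$) while being contained a.e. in $\{p_{\psi}=0\}$, which supplies the Borel set of (\ref{E:4.23}). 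The only delicate bookkeeping is keeping straight the three measure notions in play — the $l^{2}$-norm, the Lebesgue $L^{2}(0,1)$-norm, and the $\mu_{\psi}$-norm — and using $p_{\psi}\in L^{\infty}$ exactly where needed to pass between them; I do not expect a deeper obstacle once the isometry of Lemma \ref{L:3.1} is in hand.
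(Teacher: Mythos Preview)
Your proof is correct and follows essentially the same route as the paper's: both introduce $m_{c}=\sum_{k}c_{k}e_{k}$, use Parseval on $L^{2}(0,1)$ together with $p_{\psi}\in L^{\infty}$ to bound $\|m_{c}\|_{L^{2}(\mu_{\psi})}^{2}\leq B\|c\|_{l^{2}}^{2}$ for convergence, then reduce the dependency question to $\int_{0}^{1}|m_{c}|^{2}p_{\psi}\,dx=0$ and close the equivalence with $\chi_{E}$. The only cosmetic difference is that you phrase everything through the isometry $W_{\psi}$ of Lemma~\ref{L:3.1}, whereas the paper writes $m_{c}(T)\psi$ via the functional calculus; these are the same object.
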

\begin{proof}
For $c \in l^{2}$, set $m_{c}(x)=\sum_{k \in \mathbb{Z}} c_{k}e_{k}(x)$.
Then
\begin{equation}
\label{E:4.24}
  \int_{0}^{1}|m_{c}(x)|^{2}dx=\sum_{k \in \mathbb{Z}} |c_{k}|^{2}; 
\end{equation}
in particular $m_{c} \in L^{2}(0,1)$.

By Corollary \ref{C:4.4}, functional calculus and Lemma \ref{L:3.1}, the 
vector $m_{c}(T)\psi$ is then well defined for all $c \in l^{2}$; and 
\begin{align*}
  \|m_{c}(T)\|_{\mathcal{H}}^{2} 
  &= \left\Vert \sum_{k \in \mathbb{Z}} c_{k}\psi_{k}\right\Vert_{\mathcal{H}}^{2} \\
  &= \int_{0}^{1}|m_{c}(x)|^{2}p_{\psi}(x)dx  \\
  &\underset{by (\ref{E:4.7})}{\leq} B\int_{0}^{1}|m_{c}(x)|^{2}dx \\
  &\underset{by (\ref{E:4.24})}{=} B\sum_{k \in \mathbb{Z}} |c_{k}|^{2}.
\end{align*}
Hence 
\[
  \lim_{n, m \to \infty}\left\Vert\sum_{k \in (-\infty, -m) \cup (n, \infty)}
c_{k}\psi_{k}\right\Vert_{\mathcal{H}}^{2} = 0, 
\]
proving the first assertion. 

Now suppose $c \in l^{2} \setminus (0)$, and (\ref{E:4.22}) holds.  Then 
\[
  \int_{0}^{1}|m_{c}(x)|^{2}p_{\psi}(x)dx = 0.
\]
If $E_{c}:= \text{esssupp}|m_{c}(x)|^{2}$, then $|E_{c}|>0$, and 
$p_{\psi}=0$ almost everywhere on $E_{c}$.  This is the desired assertion 
(\ref{E:4.23}).

Conversely, suppose (\ref{E:4.23}) holds for some set 
$E \in \mathcal{B}([0,1])$, i.e., that $|E|>0$, and $p_{\psi}=0$ on $E$.
Then, by (\ref{E:4.24}) applied to $\chi_{E}$, we get the 
$L^{2}(0,1)$-expansion 
$\chi_{E}(x) = \sum_{k \in \mathbb{Z}} c_{k}e_{k}(x)$ almost everywhere on 
$[0,1]$, 
$|E|=\|\chi_{E}\|_{L^{2}}^{2}=\sum_{k \in \mathbb{Z}} |c_{k}|^{2}>0$.
Hence $\sum_{k \in \mathbb{Z}} c_{k}\psi_{k} = 0$ in $\mathcal{H}$, 
and $c \neq 0$ in $l^{2}$.  This is (\ref{E:4.22}), and the proof is completed.
\end{proof}

\begin{rem}
\label{R:4.6}
The equivalence of the two assertions (\ref{E:4.22}) and (\ref{E:4.23}) 
in Corollary \ref{C:4.5} above was conjectured in a talk in Chicago by 
Guido Weiss in the special case when the Hilbert space is $L^{2}(\mathbb{R})$ 
and when the unitary operator $T$ is translation by $1$. 

Moreover, in this case, if $\psi$ is the given non-zero 
$L^{2}(\mathbb{R})$-function and if (\ref{E:4.22}) holds for some 
non-zero $c$ in the sequence space $l^{2}$, then we say that the 
integral translates of $\psi$ satisfy an $L^{2}$-dependency.

Since there is typically a substantial amount of redundancy in the 
representation in $L^{2}(\mathbb{R})$ of the sequence of translates  
$\psi_{k}$ , for $k \in \mathbb{Z}$, this notion of $L^{2}$-linear 
dependency, is more subtle than the familiar notion of linear 
dependency (involving only finite sums) from linear algebra. It was 
introduced by Kolmogorov, and it is used in prediction theory, 
see e.g., \cite{MiSa80}.
\end{rem}

\begin{cor}
\label{C:4.7}
Let $T:\mathcal{H} \to \mathcal{H}$ be a unitary operator, 
$\psi \in \mathcal{H} \setminus \{0\}$, and suppose there is some set 
$S \in \mathcal{B}([0,1])$ such that
\begin{equation}
\label{E:4.25}
  0< |S| <1, \text{ and }
\end{equation}
\begin{equation}
\label{E:4.26}
  p_{\psi}(x) = \chi_{S}(x), \text{ for all } \in [0,1].
\end{equation}
Then there is some $c \in l^{2} \setminus (0)$ such that 
$\sum_{k \in \mathbb{Z}} c_{k}T^{k}\psi = 0$, i.e., there is a non-trivial 
linear relation in $\mathcal{H}(\psi)$.
\end{cor}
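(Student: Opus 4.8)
The plan is to obtain this as an immediate consequence of Corollary \ref{C:4.5}, so the entire argument reduces to checking that the hypotheses of that corollary are satisfied and then reading off its conclusion. First I would observe that, under (\ref{E:4.26}), the Radon--Nikodym derivative is $p_{\psi} = \chi_{S}$, which is manifestly in $L^{\infty}(0,1)$ with $\|p_{\psi}\|_{\infty} = 1$; hence $\mu_{\psi}$ is absolutely continuous and the Bessel hypothesis of Corollary \ref{C:4.5} holds with bound $B = 1$.

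Next I would identify the distinguished zero set. Put $E := [0,1] \setminus S$. Since $|S| < 1$ by (\ref{E:4.25}), we have $|E| = 1 - |S| > 0$, and by construction $p_{\psi} = \chi_{S} = 0$ almost everywhere on $E$. This is precisely the condition (\ref{E:4.23}) appearing in Corollary \ref{C:4.5}.

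Finally I would invoke the converse direction of Corollary \ref{C:4.5}: the existence of a positive-measure set $E$ on which $p_{\psi}$ vanishes forces the existence of a sequence $c \in l^{2} \setminus (0)$ with $\sum_{k \in \mathbb{Z}} c_{k}\psi_{k} = 0$, which is (\ref{E:4.22}) and is exactly the asserted non-trivial $L^{2}$-relation in $\mathcal{H}(\psi)$. Concretely, the sequence is produced by Fourier expansion of the indicator function, $\chi_{E}(x) = \sum_{k} c_{k} e_{k}(x)$, so that by (\ref{E:4.24}) one has $c \in l^{2}$ with $\|c\|_{l^{2}}^{2} = |E| > 0$ and hence $c \neq 0$; then $\sum_{k} c_{k}\psi_{k} = \chi_{E}(T)\psi$ has squared norm $\int_{0}^{1}|\chi_{E}|^{2}p_{\psi}\,dx = \int_{E}p_{\psi}\,dx = 0$.

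There is no genuine obstacle here beyond the bookkeeping, since Corollary \ref{C:4.5} carries out all of the analytic work. The only point worth flagging is that, of the two inequalities in (\ref{E:4.25}), it is $|S| < 1$ that drives the conclusion, being exactly the requirement that $p_{\psi} = \chi_{S}$ vanish on a set of positive measure. The lower bound $|S| > 0$ is in fact automatic from $\psi \neq 0$, since $\mu_{\psi}([0,1]) = \|\psi\|_{\mathcal{H}}^{2} = |S|$; thus it serves only to keep the hypotheses consistent and is not itself needed for the existence of the relation.
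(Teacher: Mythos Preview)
Your proposal is correct and follows essentially the same route as the paper: both set $E=[0,1]\setminus S$, take the Fourier coefficients of $\chi_{E}$ as the sequence $c$, and use $\|m(T)\psi\|_{\mathcal{H}}^{2}=\int_{S}|m|^{2}\,dx$ to see that $\sum_{k}c_{k}T^{k}\psi=0$. The only cosmetic difference is that you package the argument as an invocation of Corollary~\ref{C:4.5}, whereas the paper writes out the same computation directly.
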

\begin{proof}
Note that (\ref{E:4.25})(\ref{E:4.26}) and Corollary \ref{C:4.4} (\ref{E:4.6}) 
imply that the vectors $\{T^{k}\psi| k \in \mathbb{Z}\}$ form a Parseval 
frame in $\mathcal{H}(\psi)$; and moveover that 
\begin{equation}
\label{E:4.27}
  \|m(T)\psi\|_{\mathcal{H}}^{2} = \int_{S}|m(x)|^{2}dx
\end{equation}
holds for all $m \in L^{2}(0,1)$.  Now take $m:= \chi_{[0,1] \setminus S}$, 
and let $(c_{k}) \in l^{2} \setminus (0)$ be the corresponding Fourier 
coefficients; i.e., 
\[
  1-|S| = \sum_{k \in \mathbb{Z}} |c_{k}|^{2} >0.
\]
Recall $m(x)=\sum_{k \in \mathbb{Z}}c_{k}e_{k}(x)$, and 
$\|m(x)\|_{L^{2}}^{2}=\sum_{k \in \mathbb{Z}}|c_{k}|^{2}$.

But then $\sum_{k \in \mathbb{Z}} c_{k}T^{k}\psi = 0$ in $\mathcal{H}$ by 
virtue of (\ref{E:4.27}).
\end{proof}

\section{Dyadic Wavelets}
\label{sec:5a}
In this section we read off some corollaries regarding the two spectral 
functions that describe dyadic wavelets in $L^{2}(\mathbb{R})$.
\begin{lem}
\label{L:dl}
We state the following result for dyadic wavelet functions i.e., 
\[
  m_{0}: \mathbb{R}/\mathbb{Z} \to \mathbb{C}, \\
  |m_{0}(t)|^{2} + |m_{0}(t+\frac{1}{2})|^{2}=1,
\]
where $|m_{0}(0)|=1$, the filter $m_{0}$ is assumed to satisfy the 
low-pass condition, passing at $t = 0$.

Let $\varphi, \psi \in L^{2}(\mathbb{R})$ be the two functions for a wavelet 
filter $m_{0}$.  The consistency relation is the following identity which 
holds for all $t$:
\[
  p_{\varphi}(t)+p_{\psi}(t)=p_{\varphi}(\frac{t}{2})+p_{\varphi}(\frac{t+1}{2})
\]
\end{lem}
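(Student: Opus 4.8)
The plan is to reduce the identity to the orthogonality relation satisfied by the filter pair, exploiting Corollary \ref{C:Tf}, which identifies each spectral density $p_\varphi$, $p_\psi$ with the $\mathbb{Z}$-periodization of $|\widehat{\varphi}|^2$, $|\widehat{\psi}|^2$. First I would record the two scaling equations built into the hypothesis that $\varphi,\psi$ arise from the filter $m_0$: in the Fourier variable these read $\widehat{\varphi}(t)=m_0(t/2)\,\widehat{\varphi}(t/2)$ and $\widehat{\psi}(t)=m_1(t/2)\,\widehat{\varphi}(t/2)$, where $m_1$ is the associated high-pass filter, normalized in the standard way by $m_1(t)=e^{i2\pi t}\,\overline{m_0(t+\tfrac12)}$, so that $|m_1(t)|^2=|m_0(t+\tfrac12)|^2$.

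Next I would substitute these into the periodization formula from Corollary \ref{C:Tf}. Writing out $p_\varphi(t)=\sum_{n}|\widehat{\varphi}(t+n)|^2$ and the analogous sum for $\psi$, each term acquires a factor $|m_0((t+n)/2)|^2$, resp.\ $|m_1((t+n)/2)|^2$, multiplying $|\widehat{\varphi}((t+n)/2)|^2$. Adding the two series termwise and invoking the quadrature identity $|m_0(s)|^2+|m_1(s)|^2=1$ — which is exactly the hypothesis $|m_0(s)|^2+|m_0(s+\tfrac12)|^2=1$ rewritten via the normalization of $m_1$ — collapses the filter factors and leaves
\[
  p_\varphi(t)+p_\psi(t)=\sum_{n\in\mathbb{Z}}\bigl|\widehat{\varphi}\bigl(\tfrac{t+n}{2}\bigr)\bigr|^2.
\]

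The last step is to recognize the right-hand side as a rescaled periodization and split it according to the parity of $n$. For $n=2m$ the argument becomes $t/2+m$, and summing over $m$ reproduces $p_\varphi(t/2)$; for $n=2m+1$ the argument becomes $(t+1)/2+m$, and summing reproduces $p_\varphi((t+1)/2)$. This yields the asserted consistency relation. The only points needing care are bookkeeping ones: all the series are absolutely convergent a.e.\ since $p_\varphi\in L^1(0,1)$ by Corollary \ref{C:Tf}(iii), so the termwise addition and the even/odd regrouping are legitimate; and one must keep track of the half-scale in the arguments, treating $p_\varphi$ throughout as the full $\mathbb{Z}$-periodization of $|\widehat{\varphi}|^2$ rather than its restriction to $[0,1]$. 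I expect the main obstacle to be purely this matching of scales — ensuring that the even/odd decomposition of the scale-$\tfrac12$ sum lines up exactly with the two shifted copies $p_\varphi(t/2)$ and $p_\varphi((t+1)/2)$ — rather than anything analytically deep.
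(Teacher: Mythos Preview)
Your argument is correct: the scaling equations in the Fourier variable, the quadrature identity $|m_0|^2+|m_1|^2=1$, and the even/odd split of the half-scale periodization combine exactly as you describe to yield the consistency relation. The convergence and regrouping caveats you flag are the right ones and are handled by Corollary~\ref{C:Tf}(iii).

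As for comparison with the paper: there is nothing to compare against, because the paper's proof consists entirely of a citation to \cite{BJMP05} with no details given. Your write-up is a self-contained direct proof, and it is in fact the standard computation one finds in that reference and in the surrounding literature on generalized multiresolution analyses. So you have supplied what the paper omits.
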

\begin{proof}
\cite{BJMP05}.
\end{proof}

\begin{center}
\begin{tabular}{c}
Wavelet functions
\end{tabular}
\begin{tabular}{l|l}
\hline
Father function $\varphi$  & Mother function $\psi$\\
\hline
$\varphi(x)= 1$ on $[0,1]$ and $0$ in $\mathbb{R} \setminus [0,1]$ & 
$\psi(x)=1$ in $[0, \frac{1}{2})$, $-1$ in $[\frac{1}{2}, 1)$, and
$0$ in $\mathbb{R} \setminus [0,1)$. \\
\hline
\end{tabular}
\end{center}

Given $k$ an odd integer, we now describe the STRETCHED HAAR WAVELET 

\begin{center}
\begin{tabular}{l|l}
  \\
  $\varphi_{k}=\frac{1}{k}\varphi(\frac{x}{k})$ \text{   } & \text{   } $\psi_{k}=\frac{1}{k}\psi(\frac{x}{k})$ \\
  \\
  $|\widehat{\varphi_{k}}(t)|^{2}=\frac{sin^{2}(\pi kt)}{k^{2}(\pi t)^{2}}$ \text{   } & \text{   }  
  $|\widehat{\psi_{k}}(t)|^{2}=\frac{sin^{4}(\frac{\pi kt}{2})}{k^{2}(\pi t)^{2}}$ \\ \\
  $\text{PER}|\widehat{\varphi}|^{2}= \frac{1}{k^{2}}(\frac{sin(\pi kt)}{sin(\pi t)})^{2}$ \text{   } & \text{   }  
  $\text{PER}|\widehat{\psi}|^{2}= \frac{1}{(2k)^{2}}
(\frac{sin^{4}(\frac{\pi kt}{2})}{sin^{2}(\frac{\pi t}{2})}
+\frac{cos^{4}(\frac{\pi kt}{2})}{cos^{2}(\frac{\pi t}{2})})$. \\
  \\
\end{tabular}
\end{center}

Where we use the formula
\[
  \sum_{n \in \mathbb{Z}} \frac{1}{(t+n)^{2}} = \frac{\pi^{2}}{sin^{2}(\pi t)}
  = \pi^{2}csc^{2}(\pi t), \text{    } t \in \mathbb{R}.
\]

\subsection*{Spectral Density Functions on $[0,1] \simeq \mathbb{R}/\mathbb{Z}$}
\begin{center}
\begin{tabular}{l l}
  \\
  $p_{\varphi_{1}}(t) \equiv 1$ \text{   } & \text{   } $p_{\psi_{1}}(t) \equiv 1$  \\ \\
  If $k>1$ then & \text{ } \\
  $p_{\varphi_{k}}(t)= \frac{1}{k^{2}}(\frac{sin(\pi kt)}{sin(\pi t)})^{2}$ \text{   } & \text{   } $p_{\psi_{k}}(t)= \frac{1}{(2k)^{2}}
(\frac{sin^{4}(\frac{\pi kt}{2})}{sin^{2}(\frac{\pi t}{2})}
+\frac{cos^{4}(\frac{\pi kt}{2})}{cos^{2}(\frac{\pi t}{2})})$. \\
  \\
\end{tabular}
\end{center}

\begin{figure}[htb]
\label{F:pphi3}
  \begin{center}
    \includegraphics[width=3.3in]{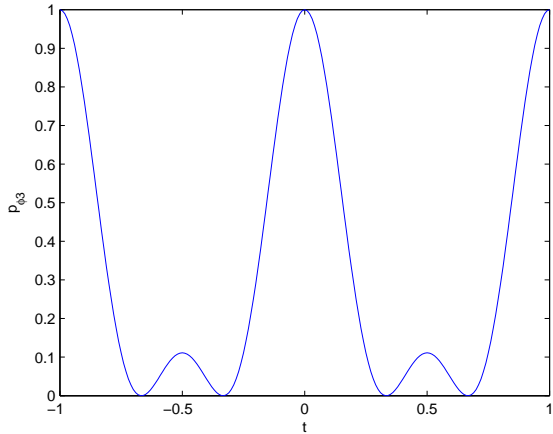}
    \caption{Graph of $p_{\varphi_{3}}$.}
  \end{center}
\end{figure}


\begin{figure}[htb]
\label{F:ppsi3}
  \begin{center}
    \includegraphics[width=3.3in]{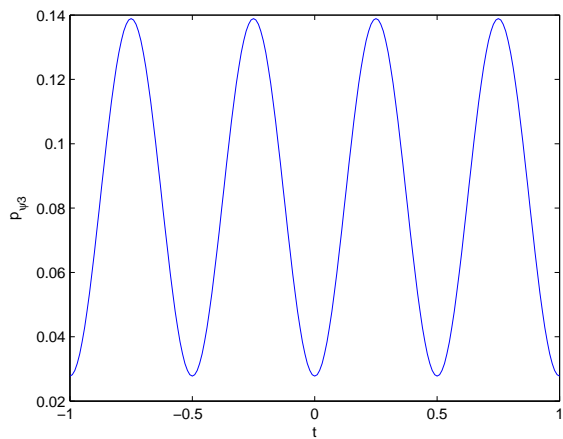}
    \caption{Graph of $p_{\psi_{3}}$.}
  \end{center}
\end{figure}

\begin{figure}[htb]
\label{F:pphipsi3}
  \begin{center}
    \includegraphics[width=3.3in]{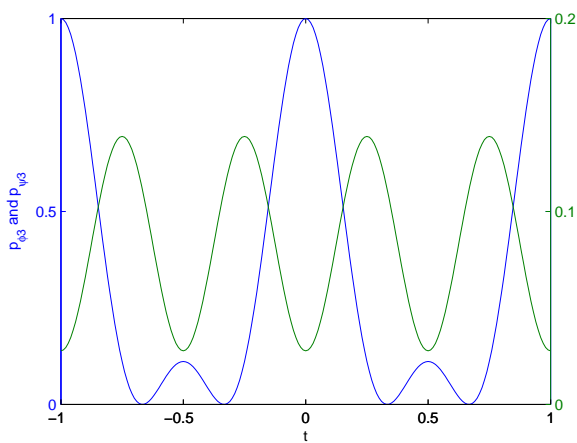}
    \caption{Graph of $p_{\varphi_{3}}$ and $p_{\psi_{3}}$.}
  \end{center}
\end{figure}


Hence we have consistency, but the peculiar thing is that in the stretched Haar 
case, we have a Parseval wavelet (i.e., for the system $(\psi_{j,k})$, i.e., 
$\psi_{j,k}(x)=2^{j/2}\psi(2^{j}x-k)$, $j,k \in \mathbb{Z}$, built 
on the interval from $0$ to $3$, instead of from $0$ to $1$). Specifically, 
$\sum_{j}\sum_{k}|\langle \psi_{j,k}|f\rangle|^{2}= \|f\|^{2}$, for all 
$f \in L^{2}(\mathbb{R})$ but there is no Parseval property on 
$\mathcal{H}(\varphi)$ or on $\mathcal{H}(\psi)$. 
Indeed, from \cite{Jor06}, Chapter 6 we know that the mother function 
$\psi$ generates 
a Parseval wavelet frame in $\mathcal{H}:= L^{2}(\mathbb{R})$, but only if 
we use both dyadic scaling and $\mathbb{Z}$-translations. However, in the 
$3$-stretched Haar case, we don't get Parseval frame systems for 
$\mathbb{Z}$-translations alone.

Specifically, if we consider the stretched father and mother functions, by 
themselves, i.e., the functions $\varphi$ and $\psi$  individually, then 
one may ask whether or not the system $\{\varphi(\cdot + n) | n \in \mathbb{Z}\}$ 
is Parseval in $\mathcal{H}(\varphi)$; or 
$\{\psi(\cdot + n) | n \in \mathbb{Z}\}$ in $\mathcal{H}(\psi)$, and the 
answer is `no' for both. Neither of the two has the Parseval property. Using 
our Corollary, and our computations of the two spectral functions $p_{\psi}$ 
and $p_{\varphi}$ we can only ascertain that the two $\mathbb{Z}$-translation 
systems have the Bessel property with Bessel constant $B = 1$, but neither 
satisfies the stronger ones of the frame properties.

\section{Cyclic Subspaces}
\label{sec:5b}
\begin{defn}
\label{D:closed}
Let $\mathcal{A}$ be an abelian group (algebra) represented by bounded 
operators on a Hilbert space $\mathcal{H}$.  A closed subspace 
$\mathcal{K} \subset \mathcal{H}$ is \textit{cyclic} if there exists 
$\varphi \in \mathcal{K} \setminus \{0\}$ such that 
$[\mathcal{A}\varphi]=\mathcal{K}$.  Here $[\cdot]$ denotes closed linear 
span.
\end{defn}

\begin{lem}
\label{L:4.8b}
Let $\mathcal{A}$ be as in definition \ref{D:closed} acting on a given 
Hilbert space $\mathcal{H}$.  Then there is an indexed family of cyclic 
vectors $(\varphi_{j})$ such that 
\begin{equation}
\label{E:oplus}
  \mathcal{H} = \bigoplus_{j} [\mathcal{A}\varphi_{j}]
\end{equation}
The symbol $\bigoplus$ in (\ref{E:oplus}) indicates orthogonality, i.e.,
\[
  [\mathcal{A}\varphi_{j}] \perp [\mathcal{A}\varphi_{k}] \text{ if } j \neq k.
\]
\end{lem}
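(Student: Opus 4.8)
The plan is to run a standard maximality argument based on Zorn's Lemma. The essential preliminary observation is that whenever $\mathcal{M} \subset \mathcal{H}$ is a closed $\mathcal{A}$-invariant subspace, its orthogonal complement $\mathcal{M}^{\perp}$ is again $\mathcal{A}$-invariant. This is where I use that the family $\mathcal{A}$ is closed under adjoints: for $a \in \mathcal{A}$, $x \in \mathcal{M}^{\perp}$, and $y \in \mathcal{M}$, we have $\langle a x | y \rangle = \langle x | a^{*} y \rangle = 0$, since $a^{*} \in \mathcal{A}$ forces $a^{*} y \in \mathcal{M}$. In the setting of this paper the operators generating $\mathcal{A}$ are the unitary translation operators $T_{1}, \dots, T_{n}$, so $\mathcal{A}$ is a group of unitaries and $T_{j}^{*} = T_{j}^{-1} \in \mathcal{A}$; thus the closure of $\mathcal{A}$ under adjoints is automatic. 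I would also note at the outset that each $[\mathcal{A}\varphi]$ is $\mathcal{A}$-invariant, since $a(\mathcal{A}\varphi) \subset \mathcal{A}\varphi$ and the bounded operators $a$ are continuous, hence preserve the closure.

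Next I would organize the Zorn's Lemma argument. Consider the collection $\mathcal{P}$ of all sets $S$ of nonzero vectors in $\mathcal{H}$ with the property that the cyclic subspaces $\{[\mathcal{A}\varphi] : \varphi \in S\}$ are pairwise orthogonal, partially ordered by inclusion. Any chain in $\mathcal{P}$ has its union as an upper bound, since pairwise orthogonality is a condition on pairs and so is preserved under unions; hence Zorn's Lemma yields a maximal element, which I index as $(\varphi_{j})_{j \in J}$. Put $\mathcal{K} := \bigoplus_{j \in J}[\mathcal{A}\varphi_{j}]$, a closed $\mathcal{A}$-invariant subspace, because each summand is $\mathcal{A}$-invariant and the orthogonal direct sum of pairwise orthogonal closed invariant subspaces is again closed and invariant.

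The final step is to show $\mathcal{K} = \mathcal{H}$. Suppose to the contrary that $\mathcal{K}^{\perp} \neq \{0\}$. By the preliminary observation $\mathcal{K}^{\perp}$ is $\mathcal{A}$-invariant, so choosing any $\varphi \in \mathcal{K}^{\perp} \setminus \{0\}$ produces a nonzero cyclic subspace $[\mathcal{A}\varphi] \subset \mathcal{K}^{\perp}$, which is therefore orthogonal to every $[\mathcal{A}\varphi_{j}]$. Adjoining $\varphi$ to the set $(\varphi_{j})_{j \in J}$ then gives a strictly larger element of $\mathcal{P}$, contradicting maximality. Hence $\mathcal{K}^{\perp} = \{0\}$ and the decomposition (\ref{E:oplus}) follows, with the orthogonality clause being exactly the pairwise-orthogonality defining membership in $\mathcal{P}$. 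The main obstacle, and the only place where a hypothesis beyond boundedness is genuinely needed, is the invariance of the orthogonal complement; everything else is the routine packaging of orthogonal cyclic pieces via Zorn's Lemma.
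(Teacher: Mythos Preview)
Your argument is correct and is precisely the standard Zorn's Lemma application the paper invokes; the paper itself simply cites \cite{Nel69} without writing out the details. Your identification of the adjoint-closure of $\mathcal{A}$ (automatic here since $\mathcal{A}$ is a group of unitaries) as the key ingredient for invariance of orthogonal complements is exactly right.
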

\begin{proof}
This is a standard appliation of Zorn's lemma, see e.g., \cite{Nel69}
\end{proof}

\begin{cor}
\label{C:4.8}
Let $T:\mathcal{H} \to \mathcal{H}$ be a unitary operator, and suppose 
$p_{\psi} \in L^{\infty}$ for all $\psi \in \mathcal{H}$.
Let $\mathcal{M} \subset \mathcal{H}$ be a closed $T$-invariant subspace.  
Then there are subsets $[0,1] \supset S_{1} \supset S_{2} \supset \cdots$ 
in $\mathcal{B}([0,1])$, and vectors $\psi_{j} \in \mathcal{M}$ such that 
$p_{\psi_{j}} = \chi_{S_{j}}$ for $j=1,2, \cdots$, and
\begin{equation}
\label{E:4.28}
  \mathcal{M} = \bigoplus_{j \geq 1} \mathcal{H}(\psi_{j})
\end{equation}
where the summation in (\ref{E:4.28}) indicates orthogonality of the 
spaces $\mathcal{H}(\psi_{j})$ corresponding to different index values 
for $j$.
\end{cor}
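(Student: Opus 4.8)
The plan is to reduce the statement to the multiplicity theory for the single unitary $T|_{\mathcal{M}}$: first produce an unordered orthogonal cyclic decomposition of $\mathcal{M}$ via Lemma \ref{L:4.8b}, then normalize each generator to an indicator density via Theorem \ref{T:4.3}, and finally reorganize the pieces into the nested form by a measurable rearrangement. Throughout I work in the separable case, which is what the countable indexing $\psi_{1},\psi_{2},\dots$ and $S_{1}\supset S_{2}\supset\cdots$ presupposes.

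First I would note that, since $\mathcal{M}$ is closed and invariant under the unitary $T$, it reduces $T$ (being invariant under $T$ and $T^{*}=T^{-1}$), so $T|_{\mathcal{M}}$ is unitary and the spectral projections $E(A)$ restrict to $\mathcal{M}$ by Lemma \ref{L:3.3a}. Applying Lemma \ref{L:4.8b} to the abelian algebra generated by $T$ and $T^{*}$ yields $\mathcal{M}=\bigoplus_{i}\mathcal{H}(\varphi_{i})$ with the summands mutually orthogonal. By hypothesis each $p_{\varphi_{i}}\in L^{\infty}$, so the Bessel condition (\ref{E:4.7}) holds and Theorem \ref{T:4.3} applies: replacing each $\varphi_{i}$ by its renormalization I may assume $p_{\varphi_{i}}=\chi_{A_{i}}$ with $A_{i}=\text{esssupp}(p_{\varphi_{i}})$, without changing $\mathcal{H}(\varphi_{i})$. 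Lemma \ref{L:3.1} then supplies isometries $W_{\varphi_{i}}\colon L^{2}(A_{i},dx)\to\mathcal{H}(\varphi_{i})$ intertwining multiplication by $e_{1}$ with $T$, so that in the spectral model $\mathcal{M}\cong\bigoplus_{i}L^{2}(A_{i},dx)$, with $T$ acting as multiplication by $e_{1}(x)=e^{i2\pi x}$.

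Next I would pass to the multiplicity function $m(x):=\sum_{i}\chi_{A_{i}}(x)$, a Borel function on $[0,1]$ with values in $\{0,1,2,\dots,\infty\}$, and set $S_{j}:=\{x:m(x)\ge j\}$; these are Borel, automatically nested, $S_{1}\supset S_{2}\supset\cdots$, and satisfy $\sum_{j}\chi_{S_{j}}=m$ pointwise. The goal is then an intertwining unitary equivalence $\bigoplus_{i}L^{2}(A_{i},dx)\cong\bigoplus_{j}L^{2}(S_{j},dx)$ commuting with multiplication by $e_{1}$. Both sides are direct integrals over $[0,1]$ whose fiber at $x$ has dimension $m(x)$, so I would build a measurable field of fiber-unitaries $U_{x}$ matching the active indices $\{i:x\in A_{i}\}$ with $\{1,\dots,m(x)\}$ by an order-preserving enumeration; since each $A_{i}$ and each $S_{j}$ is Borel, this field is measurable and assembles into a unitary $V$ commuting with multiplication by $e_{1}$, hence with $T$. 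Transporting the standard generators of the summands $L^{2}(S_{j},dx)$ back through $V$ and the $W_{\varphi_{i}}$ produces vectors $\psi_{j}\in\mathcal{M}$ with $\mathcal{H}(\psi_{j})$ corresponding to $L^{2}(S_{j},dx)$; by Lemma \ref{L:3.1} this yields $p_{\psi_{j}}=\chi_{S_{j}}$ and the orthogonal decomposition (\ref{E:4.28}).

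The main obstacle is the construction of the measurable rearrangement $V$, i.e., the jointly measurable selection of the fiber-unitaries $U_{x}$ in the presence of possibly infinite multiplicity; this is precisely the analytic core of the Hahn--Hellinger multiplicity theorem. I would stress that a naive inductive scheme peeling off a vector of maximal spectral type at each stage gives the nesting for free, from $\mathcal{M}_{n}\subset\mathcal{M}_{n-1}$, but does not by itself force exhaustion on the set $\{x:m(x)=\infty\}$: maximality constrains only the support $S_{n}$, not whether the successive orthonormal fiber vectors span the fiber, so $\bigcap_{n}\mathcal{M}_{n}$ can fail to be $\{0\}$ there. The rearrangement formulation sidesteps this by matching the multiplicity functions directly. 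A secondary point to verify is the measurability of $m$ and the identity $\sum_{j}\chi_{S_{j}}=m$, together with the routine check, via Lemma \ref{L:3.1}, that the pulled-back $\psi_{j}$ indeed have density $\chi_{S_{j}}$ and lie in $\mathcal{M}$.
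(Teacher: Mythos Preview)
Your proposal is correct and follows essentially the same route the paper indicates: apply the Spectral Theorem (in Hahn--Hellinger multiplicity form) to $T|_{\mathcal{M}}$, decompose $\mathcal{M}$ into cyclic pieces via Lemma~\ref{L:4.8b}, and renormalize the generators to indicator densities via Theorem~\ref{T:4.3}. The paper's own proof is a one-sentence reference to the Spectral Theorem and the preceding corollaries, so your discussion of the measurable fiber-rearrangement and the exhaustion issue on $\{m=\infty\}$ supplies substantially more detail than the paper itself.
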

\begin{proof}
This follows from the Spectral-Theorem applied to the restricted operator 
$T |_{\mathcal{M}}$, combined with the previous two corollaries; see also 
the references \cite{BMM99, Bag00} for special cases of this result.
\end{proof}

\section{Stochastic Processes}
\label{sec:5}
In the next section we show that our spectral analysis of translations 
produces a particular class of stochastic processes. Hence our results for 
translations, and more generally, for unitary operators $T$ in Hilbert 
space have a probabilistic significance. 

This is relevant to the way wavelets are used in the processing of signals 
with noise; see \cite{Jor06, Jor06b} and the following papers 
\cite{JoSo07, Son07}.
A popular method of spectral analyzing correlations in stochastic 
components in a signal or in an image is to introduce a spectral function, 
or spectral kernel, and then an associated selfadjoint operator. When this 
operator is then diagonalized, a substantial simplification occurs. The 
diagonalization goes under the name ``Karhunen-Lo\`{e}ve;"  
see \cite{JoSo07, Son07}. But for the use of the Karhunen-Lo\`{e}ve transform 
method, 
it will be significant that the stochastic processes involved be Gaussian. 
In this section we show that there is always such a Gaussian choice, and 
that it is canonical. Moreover we show (Corollary \ref{C:5.6}) that each of these 
Gaussian processes is naturally associated with a unitary operator so our 
results from sections \ref{sec:3} and \ref{sec:4} apply.

A stochastic process is an indexed system of random variables. By ``random 
variables" we mean measurable functions on a probability space 
$(\Omega, \mathcal{M}, P)$ where $\Omega$ is a set, $\mathcal{M}$ is a fixed 
sigma-algebra of subsets in $\Omega$ (events), and $P$ is a probability 
measure, i.e., $P(S)$ is defined for $S \in \mathcal{M}$.

We will make the starting assumption that our random variables $X$, 
referring to $(\Omega, \mathcal{M}, P)$, will be in 
$L^{2}(\Omega, \mathcal{M}, P)$, abbreviated $L^{2}(P)$, i.e., that 
\begin{equation}
\label{E:5.1}
  \int_{\Omega} |X(\omega)|^{2}dP(\omega) = E_{\omega}(|X(\omega)|^{2})
  = \langle X | X \rangle_{L^{2}(P)} < \infty.
\end{equation}

In the present discussion, the index set for the random variables will be 
$[0,1]$ or the Borel sets $\mathcal{B}([0,1])$.  We will say that a process 
$X$ is Gaussian if $X_{t}$, for $t \in [0,1]$, or $X_{A}$, for 
$A \in \mathcal{B}([0,1])$, is Gaussian.

\begin{lem}
\label{L:5.1}
(Kolmogorov) Let $W$ be a set and let $R : W \times W \to \mathbb{C}$ be a 
\textit{positive (semi)-definite} function, i.e., 
\begin{equation}
\label{E:5.2}
  \sum \sum \overline{c}_{v}R(v,w)c_{w} \geq 0 \text{,     } (v,w) 
  \in W \times W,
\end{equation}
for all finite sequences $(c_{w \in W})$, i.e., 
$c_{w} \in \mathbb{C} \setminus (0)$ for at most a finite subset in 
$W$, depending on $c$.

Then there is a Gaussian stochastic process $X, W, (\Omega, \mathcal{M}, P)$ 
such that 
\begin{equation}
\label{E:5.3}
  E(\overline{X}_{v}X_{w}) = R(v,w) \text{, for all } (v,w) \in W \times W.
\end{equation}
\end{lem}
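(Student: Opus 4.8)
The plan is to realize $X$ via Kolmogorov's extension theorem, exactly as in the proof of Lemma \ref{L:1.1} above, but now indexed by the general set $W$ rather than by $\mathbb{Z}$. First I would, for each finite subset $F = \{w_{1}, \dots, w_{m}\} \subset W$, regard the $m \times m$ matrix $R_{F} := (R(w_{i}, w_{j}))_{i,j}$ as the covariance matrix of a centered complex Gaussian vector $(z_{w})_{w \in F}$ on $\mathbb{C}^{F}$. Hypothesis (\ref{E:5.2}) says precisely that each such $R_{F}$ is Hermitian and positive semi-definite, which is exactly the data needed to prescribe a Gaussian law $\mathbb{P}_{F}$ on $\mathbb{C}^{F}$ with mean zero and $E(\overline{z_{w}}\, z_{w'}) = R(w, w')$ for $w, w' \in F$.

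The cleanest way to produce $\mathbb{P}_{F}$ --- and the step that must be handled with care --- is through characteristic functions rather than densities, because $R_{F}$ is only assumed positive \emph{semi}-definite and may be singular; in the degenerate case there is no Lebesgue density, yet the Gaussian measure still exists. Concretely I would take $\mathbb{P}_{F}$ to be the circularly-symmetric complex Gaussian whose characteristic functional is $\xi \mapsto \exp(-\tfrac{1}{4}\langle \xi \,|\, R_{F}\, \xi \rangle)$ for $\xi \in \mathbb{C}^{F}$ (so the pseudo-covariance vanishes). Since $R_{F} \geq 0$, this functional is continuous, equals $1$ at $\xi = 0$, and is positive definite, so Bochner's theorem delivers a genuine probability measure $\mathbb{P}_{F}$; differentiating the functional twice recovers $E(\overline{z_{w}}\, z_{w'}) = R(w, w')$. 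The main obstacle of the whole argument lives here, namely guaranteeing existence when $R$ fails to be strictly positive definite, and routing the construction through Bochner's theorem is what makes that transparent.

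Next I would verify the Kolmogorov consistency conditions: for $F \subset F'$ the coordinate projection $\mathbb{C}^{F'} \to \mathbb{C}^{F}$ pushes $\mathbb{P}_{F'}$ forward to $\mathbb{P}_{F}$. This is immediate from the characteristic-function description, since restricting $\xi$ to be supported on $F$ turns $\langle \xi \,|\, R_{F'}\, \xi \rangle$ into $\langle \xi \,|\, R_{F}\, \xi \rangle$. With consistency established, Kolmogorov's extension theorem (as in \cite{Nel69}) produces a probability space $(\Omega, \mathcal{M}, P)$ with $\Omega = \mathbb{C}^{W}$ the space of functions $\omega : W \to \mathbb{C}$, carrying a measure $P$ whose finite-dimensional marginals are the $\mathbb{P}_{F}$. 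Finally I would define $X_{w}(\omega) := \omega(w)$ as the coordinate evaluations; by construction the family $(X_{w})_{w \in W}$ is jointly Gaussian and centered, and its covariance is $E(\overline{X_{v}}\, X_{w}) = R(v, w)$, which is the asserted identity (\ref{E:5.3}).
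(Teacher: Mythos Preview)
Your proposal is correct and follows exactly the approach the paper has in mind: the paper's own proof is merely a citation to \cite{Jor06} and \cite{Nel69}, and the argument you spell out is precisely the Kolmogorov-consistency construction already sketched in the proof of Lemma~\ref{L:1.1}, now indexed by an arbitrary set $W$ instead of $\mathbb{Z}$. Your extra care in routing the finite-dimensional existence through characteristic functions and Bochner's theorem (rather than writing down a density) is a genuine improvement over the paper's sketch, since it cleanly handles the possibility that some $R_{F}$ is singular; otherwise the two arguments coincide.
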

\begin{proof}
See e.g., \cite{Jor06} or \cite{Nel69}.
\end{proof}

\begin{ex}
\label{ex:5.2}
(Brownian motion.) For $s, t \in [0,1]$, set $s \wedge t = min(s,t)$, and 
\begin{equation}
\label{E:5.4}
  R(s,t) := s \wedge t.
\end{equation}
Then $R$ is positive definite, and the Gaussian process $(X_{t})$ with 
\begin{equation}
\label{E:5.5}
  E(\overline{X}_{s}X_{t}) = s \wedge t \text{, and}
\end{equation}
\begin{equation}
\label{E:5.6}
  E(X_{t}) = 0
\end{equation}
is called the normalized Brownian motion on $[0,1]$.
\end{ex}

\begin{ex}
\label{ex:5.3}
(Set-indexed Gaussian.) Let $\mu$ be a finite Borel measure on $[0,1]$. 
For $A, B \in \mathcal{B}([0,1])$, the Borel sets in the unit interval, set 
\begin{equation}
\label{E:5.7}
  R(A,B):= \mu(A \cap B).
\end{equation}

Then $R$ is positive definite, and the Gaussian process 
$(X_{A})_{A \in \mathcal{B}([0,1])}$ with
\begin{equation}
\label{E:5.8}
  E(\overline{X}_{A}X_{B}) = \mu(A \cap B)
\end{equation}
and $E(X_{A}) = 0$, $A, B \in \mathcal{B}([0,1])$; is called the 
$\mu$-Gaussian process.  (Note that $E(\cdot)=\int_{\mathbb{R}}\cdot dP$, 
and that $P$ depends on $\mu$.)
\end{ex}

\begin{rem}
\label{R:5.4}
An easy computation shows that Brownian motion is $\mu$-Gaussian with 
$\mu :=$ the Lesbegue measure restricted to $[0,1]$.
\end{rem}

\begin{lem}
\label{L:5.5}
\cite{Nel69} Let $(X_{A})_{A \in \mathcal{B}([0,1])}$ be a $\mu$-Gaussian 
process; and let $m \in L^{2}([0,1], \mu)$.  Then the stochastic integral 
\begin{equation}
\label{E:5.9}
  \int_{0}^{1}m(t)dX_{t} \in L^{2}(\Omega, \mathcal{M}, P) 
\end{equation}
and
\begin{equation}
\label{E:5.10}
  \left\Vert\int_{0}^{1}m(t)dX_{t}\right\Vert^{2}_{L^{2}(P)} = \int_{0}^{1}|m(t)|^{2}d\mu(t). 
\end{equation}
\end{lem}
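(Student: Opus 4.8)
The plan is to construct the stochastic integral by the standard isometric-extension procedure, using the covariance identity $E(\overline{X}_{A}X_{B}) = \mu(A \cap B)$ from Example \ref{ex:5.3} as the source of the isometry (\ref{E:5.10}). First I would define the integral on indicator functions by setting $\int_{0}^{1}\chi_{A}(t)\,dX_{t} := X_{A}$ for $A \in \mathcal{B}([0,1])$, and then extend by linearity to the class $\mathcal{S}$ of simple functions $m = \sum_{j=1}^{p} c_{j}\chi_{A_{j}}$, taken with the $A_{j}$ pairwise disjoint.

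Next I would verify both assertions on $\mathcal{S}$. For such an $m$, linearity gives $\int_{0}^{1}m(t)\,dX_{t} = \sum_{j}c_{j}X_{A_{j}}$, which lies in $L^{2}(P)$ since each $X_{A_{j}}$ does. Expanding the $L^{2}(P)$-norm and inserting the covariance identity yields
\[
  \Big\| \sum_{j} c_{j} X_{A_{j}} \Big\|_{L^{2}(P)}^{2} = \sum_{j}\sum_{k} \overline{c}_{j} c_{k}\, \mu(A_{j} \cap A_{k}) = \sum_{j} |c_{j}|^{2} \mu(A_{j}),
\]
where the last step uses disjointness, so that $\mu(A_{j} \cap A_{k}) = \delta_{j,k}\,\mu(A_{j})$. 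The right-hand side is exactly $\int_{0}^{1}|m(t)|^{2}\,d\mu(t)$, so (\ref{E:5.10}) holds on $\mathcal{S}$; the same computation shows the value is independent of the chosen disjoint representation of $m$.

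Finally I would extend from $\mathcal{S}$ to all of $L^{2}([0,1],\mu)$. Since $\mu$ is a finite Borel measure, $\mathcal{S}$ is dense in $L^{2}(\mu)$; given $m \in L^{2}(\mu)$, choose $m_{n} \in \mathcal{S}$ with $m_{n} \to m$ in $L^{2}(\mu)$. The isometry just established makes $\int m_{n}\,dX$ Cauchy in $L^{2}(P)$, so by completeness of $L^{2}(P)$ it converges to a limit, which I define to be $\int_{0}^{1}m(t)\,dX_{t}$; the limit is independent of the approximating sequence, again by the isometry. Passing to the limit in (\ref{E:5.10}) gives the identity for all $m \in L^{2}(\mu)$, and membership in $L^{2}(P)$ is automatic from the convergence. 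The main obstacle is the bookkeeping required to make the integral well defined on $\mathcal{S}$ for arbitrary, not necessarily disjoint, representations; this is resolved by passing to a common refinement of any two simple functions into disjoint Borel pieces, on which the covariance identity collapses to the diagonal as above. The whole construction parallels the isometry $W_{\psi}$ of Lemma \ref{L:3.1}, with $L^{2}(\mu)$ playing the role of $L^{2}(\mu_{\psi})$ and $L^{2}(P)$ that of the cyclic subspace.
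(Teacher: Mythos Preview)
Your proposal is correct and is exactly the standard isometric-extension construction of the Wiener-type integral. The paper's own proof is only a one-sentence sketch: it points back to the Kolmogorov/Gaussian construction of the probability space carried out in Lemma~\ref{L:1.1} (and to \cite{Nel69}), noting merely that the index set is now $\mathcal{B}([0,1])$ rather than $\mathbb{Z}$. In other words, the paper emphasizes how $(\Omega,\mathcal{M},P)$ arises, and leaves the actual integral construction to the reference; you take the process as given (as the hypothesis states) and supply the integral construction directly. Your argument is what one would find upon unpacking the reference, so the two are the same in substance, with yours being the more explicit version.
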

\begin{proof}
The idea in the proof is to construct the Gaussian probability space 
$(\Omega, \mathcal{M}, P)$ as in Lemma \ref{L:1.1}.  The main difference 
between the two cases is that our stochastic processes $X$ is now indexed 
by Borel sets in contrast to the discrete index used in Lemma \ref{L:1.1}.
\end{proof}

\begin{cor}
\label{C:5.6}
Let $(X_{A})_{A \in \mathcal{B}([0,1])}$ be a $\mu$-Gaussian process; and 
define an operator $T:L^{2}(P) \to L^{2}(P)$ by 
\begin{equation}
\label{E:5.11}
  T(\int_{0}^{1}m(t)dX_{t}) := \int_{0}^{1}e_{1}(t)m(t)dX_{t},
\end{equation}
then $T$ is unitary.
\end{cor}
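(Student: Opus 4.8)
The plan is to show $T$ in (\ref{E:5.11}) is unitary by establishing that it is a well-defined, surjective isometry on $L^2(P)$. The key structural fact is Lemma \ref{L:5.5}: the stochastic integral map $\Phi : m \mapsto \int_0^1 m(t)\,dX_t$ is, by the isometry identity (\ref{E:5.10}), an isometry from $L^2([0,1],\mu)$ into $L^2(P)$. First I would argue that $\Phi$ is in fact onto the \emph{Gaussian subspace} of $L^2(P)$ spanned by the increments of the process $X$ (the closed span of $\{X_A : A \in \mathcal{B}([0,1])\}$), so that $\Phi$ is an isometric isomorphism onto this closed cyclic subspace; this reduces the unitarity of $T$ to a statement purely on the function side.

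Next I would transport the definition (\ref{E:5.11}) through $\Phi$. Since $\Phi$ is isometric and $T \circ \Phi = \Phi \circ M_{e_1}$, where $M_{e_1}$ denotes multiplication by $e_1(t)=e^{i2\pi t}$ on $L^2([0,1],\mu)$, the operator $T$ is unitarily equivalent to $M_{e_1}$. Hence it suffices to check that $M_{e_1}$ is unitary on $L^2([0,1],\mu)$. This is immediate: because $|e_1(t)| = |e^{i2\pi t}| = 1$ for every $t$, multiplication by $e_1$ preserves the $L^2(\mu)$-norm,
\[
  \|M_{e_1} m\|_{L^2(\mu)}^2 = \int_0^1 |e_1(t)|^2 |m(t)|^2 \,d\mu(t) = \int_0^1 |m(t)|^2 \,d\mu(t) = \|m\|_{L^2(\mu)}^2,
\]
so $M_{e_1}$ is an isometry; and it is surjective with inverse $M_{\overline{e_1}} = M_{e_{-1}}$, since $e_1 \overline{e_1} = 1$. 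Therefore $M_{e_1}$ is unitary, and by the unitary equivalence so is $T$.

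Concretely, combining these steps: for $F = \int_0^1 m(t)\,dX_t$ with $m \in L^2([0,1],\mu)$, the isometry (\ref{E:5.10}) gives
\[
  \|T F\|_{L^2(P)}^2 = \left\| \int_0^1 e_1(t) m(t) \,dX_t \right\|_{L^2(P)}^2 = \int_0^1 |e_1(t) m(t)|^2 \,d\mu(t) = \int_0^1 |m(t)|^2 \,d\mu(t) = \|F\|_{L^2(P)}^2,
\]
so $T$ preserves norms on the Gaussian subspace. Surjectivity follows because every $F$ in that subspace arises as $\Phi(m)$ for some $m$, and then $T^{-1}F = \int_0^1 \overline{e_1(t)}\, m(t)\,dX_t$ is its preimage. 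I would also remark that $T$ is well defined independently of the representation of $F$ as a stochastic integral, which again follows from the injectivity half of the isometry in Lemma \ref{L:5.5}.

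The main obstacle I anticipate is not any of the norm computations, which are routine given Lemma \ref{L:5.5}, but rather the \emph{well-definedness and domain} issue: one must be careful that $T$ acts on all of $L^2(P)$ and not merely on the Gaussian subspace generated by the first-order integrals. If the intended domain is the full $L^2(P)$ of the Gaussian probability space (which contains higher Wiener chaos, i.e.\ polynomials and limits of the $X_A$), then the formula (\ref{E:5.11}) only defines $T$ on the first chaos, and one would either restrict attention to that cyclic subspace or extend $T$ multiplicatively (as the second quantization of $M_{e_1}$) to obtain a genuine unitary on all of $L^2(P)$. I would flag this point and interpret the statement as asserting unitarity on the closed Gaussian subspace spanned by the stochastic integrals, where the argument above is complete.
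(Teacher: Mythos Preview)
Your proof is correct and follows the same route as the paper: reduce via the isometry (\ref{E:5.10}) of Lemma \ref{L:5.5} to the obvious fact that multiplication by $e_{1}$ is unitary on $L^{2}([0,1],\mu)$. Your added remark about the domain---that (\ref{E:5.11}) only specifies $T$ on the closed Gaussian subspace spanned by the stochastic integrals, not on all of $L^{2}(P)$---is a valid point that the paper does not address.
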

\begin{proof}
By (\ref{E:5.10}), it is enough to prove that 
\begin{equation}
\label{E:5.12}
  L^{2}(\mu) \ni m(t) \mapsto e_{1}(t)m(t) \in L^{2}(\mu),
\end{equation}
is a unitary operator; but this last fact is obvious.
\end{proof}

\begin{ex}
\label{ex:5.7}
(Non-Gaussian) Let $(Tf)(x) := f(x-1)$ be translation in $L^{2}(\mathbb{R})$, 
and let $\psi \in L^{2}(\mathbb{R}) \setminus \{0\}$.  Set 
\begin{equation}
\label{E:5.13}
  d\mu_{\psi}(t):=p_{\psi}(t)dt
\end{equation}
with
\begin{equation}
\label{E:5.14}
  p_{\psi}(t):= \sum_{n \in \mathbb{Z}}|\widehat{\psi}(t+n)|^{2}, \text{  }
  t \in [0,1];
\end{equation}
and 
\begin{equation}
\label{E:5.15}
  p_{\psi}(s,t):= \sum_{n \in \mathbb{Z}}\overline{\widehat{\psi}(s+n)}\widehat{\psi}(t+n).
\end{equation}
\end{ex}
Set
\begin{equation}
\label{E:5.16}
  X_{t}(n):= \widehat{\psi}(t+n), \text{   } n \in \mathbb{Z} \text{,   } 
  t \in [0,1].
\end{equation}

Then each $X_{t}$ in $(X_{t})_{t \in [0,1]}$ is a random variable on 
$\mathbb{Z}$ with counting measure, and 
\begin{equation}
\label{E:5.17}
  E(\overline{X}_{s}X_{t}) = p_{\psi}(s,t) \text{, for all } s, t \in [0,1].
\end{equation}
\begin{proof}
See section \ref{sec:3} above.
\end{proof}

Our final result also follows from these considerations and the lemmas 
in section \ref{sec:3}.  It is about the special case when function 
$p_{\psi}(\cdot)$ in (\ref{E:5.14}) is in $L^{2}([0,1])$.

\begin{rem}
\label{R:5.5}
It is of interest to compare the realization of $X_{t}(n):=\widehat{\psi}(t+n)$ 
in $L^{2}(\mathbb{R})$; see (\ref{E:5.16}) with its Gaussian version in 
$L^{2}(\Omega_{\psi}, P_{\psi})$, i.e., with the stochastic integral
\[
  \int_{0}^{1}m(t)dX_{t} \in L^{2}(\Omega_{\psi}, P_{\psi}),
\]
and with $P_{\psi}$ denoting the Gaussian measure.

The result is as follows:
\newline
For $s, t \in [0,1]$, set 
\[
  Q^{\psi}(s,t):=\sum_{n \in \mathbb{Z}} \overline{\widehat{\psi'}(s+n)}
\widehat{\psi'}(t+n),
\]
and $\widehat{\psi'}(t):=\frac{d}{dt}\widehat{\psi}(t)$.
Then 
\[
  \left\Vert\int_{0}^{1}m(t)dX_{t}\right\Vert^{2}_{L^{2}(\Omega_{\psi}, P_{\psi})}
  = \int_{0}^{1}\int_{0}^{1}\overline{m(s)}m(t)Q^{\psi}(s,t)dsdt.
\]
\end{rem}

\begin{cor}
\label{C:5.6a}
Let $T, L^{2}(\mathbb{R})$ and $\psi$ be as in Example (Non-Gaussian) above. 
Set
\begin{equation}
\label{E:5.18}
  \psi_{k} := T^{k}\psi = \psi(\cdot -k), \text{ } k \in \mathbb{Z}.
\end{equation}

On the dense subspace $\mathcal{D} \subset l^{2}$ of finitely indexed 
sequences, set
\begin{equation}
\label{E:5.19}
  F((c_{k})_{k \in \mathbb{Z}}):= \sum_{k \in \mathbb{Z}}c_{k}\psi_{k}.
\end{equation}

We have the following (a) $\Longleftrightarrow$ (b) $\Longrightarrow$ (c), 
where the three affirmations (a)-(c) are as follows:
\begin{itemize}
  \item[(a)] $p_{\psi} \in L^{2}(0,1)$,
  \item[(b)] $(\langle \psi | \psi_{k} \rangle) \in l^{2}(\mathbb{Z})$,
  and
  \item[(c)] $l^{2} \supset \mathcal{D} \overset{F}{\longrightarrow}
  \mathcal{H}(\psi) \subset L^{2}(\mathbb{R})$ is a closable operator, when 
  $F$ is defined by (\ref{E:5.19}).
\end{itemize}
\end{cor}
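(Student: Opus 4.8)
The plan is to route everything through the isometric isomorphism $W = W_\psi : L^2(\mu_\psi) \to \mathcal{H}(\psi)$ of Lemma~\ref{L:3.1}, the closability criterion of Lemma~\ref{L:2.5}, and the identification of $\mu_\psi$ supplied by Corollary~\ref{C:Tf}. The whole statement should reduce to a single computation of the coefficients in (b) plus two off-the-shelf facts.

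First I would compute those coefficients. Since the functional calculus for the constant function returns the identity, $\psi = W(e_0)$, while $\psi_k = T^k\psi = W(e_k)$; the isometry property of $W$ then gives
\[
  \langle \psi \mid \psi_k \rangle_{L^2(\mathbb{R})}
  = \langle e_0 \mid e_k \rangle_{L^2(\mu_\psi)}
  = \int_0^1 e_k(x)\, d\mu_\psi(x)
  = \int_0^1 e^{i2\pi kx}\, p_\psi(x)\, dx,
\]
where the last step uses $d\mu_\psi(x) = p_\psi(x)\,dx$ from Corollary~\ref{C:Tf}. Thus $\langle \psi \mid \psi_k \rangle$ is (up to the reindexing $k \mapsto -k$) the $k$-th Fourier coefficient of $p_\psi \in L^1(0,1)$. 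The equivalence (a) $\iff$ (b) is then immediate from the Parseval identity (\ref{E:ckl}): a function lies in $L^2(0,1)$ exactly when its Fourier coefficients form an $\ell^2$-sequence, and reindexing does not affect $\ell^2$-membership, so $p_\psi \in L^2(0,1)$ holds iff $(\langle \psi \mid \psi_k \rangle)_k \in \ell^2(\mathbb{Z})$.

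For (b) $\implies$ (c) I would observe that the operator $F$ of (\ref{E:5.19}) factors as $F = W \circ F_0$, where $F_0 : \mathcal{D} \to L^2(\mu_\psi)$ is the operator $(c_k) \mapsto \sum_k c_k e_k$ of Definition~\ref{D:2.4}; indeed $W\bigl(\sum_k c_k e_k\bigr) = \sum_k c_k \psi_k$ on the finite sequences. Because $W$ is an isometric isomorphism, conjugation by $\mathrm{id}_{\ell^2} \oplus W$ carries the graph of $F_0$ onto the graph of $F$ and commutes with norm-closure, so $F$ is closable iff $F_0$ is. The hypothesis (\ref{E:ek}) of Lemma~\ref{L:2.5}, namely $\sum_k \bigl| \int e_k\, d\mu_\psi \bigr|^2 < \infty$, is by the first step exactly the statement $\sum_k |\langle \psi \mid \psi_k \rangle|^2 < \infty$, i.e.\ condition (b). Hence Lemma~\ref{L:2.5} yields closability of $F_0$, and therefore of $F$.

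I expect the only genuinely substantive point to be this factorization $F = W \circ F_0$ and the transfer of closability through the unitary $W$; everything else is the coefficient identity together with direct appeals to Lemma~\ref{L:2.5} and Parseval. I would also remark why only (a) $\iff$ (b) $\implies$ (c) is claimed and no converse to (c): closability is strictly weaker than $\ell^2$-summability of the coefficients, so one should not attempt an implication (c) $\implies$ (b).
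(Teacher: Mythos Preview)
Your proof is correct and the underlying idea coincides with the paper's, but your packaging is more structural. For (a) $\Leftrightarrow$ (b), the paper computes the Fourier coefficients of $p_\psi$ directly from the concrete formula $p_\psi(t) = \sum_n |\widehat\psi(t+n)|^2$, unfolding the periodization and invoking Parseval on $\mathbb{R}$ to obtain $\widehat{p_\psi}(k) = \langle \psi \mid \psi_k \rangle$; you reach the same identity abstractly via the isometry $W_\psi$ and Corollary~\ref{C:Tf}. For (b) $\Rightarrow$ (c), the paper works directly with the adjoint: it notes that $(F^*f)_k = \langle \psi_k \mid f \rangle$ and that condition (b), together with the stationarity $\langle \psi_j \mid \psi_k \rangle = \langle \psi \mid \psi_{k-j} \rangle$, places every $\psi_j$ in $\mathrm{dom}(F^*)$, so $F^*$ is densely defined. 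Your factorization $F = W \circ F_0$ followed by an appeal to Lemma~\ref{L:2.5} is exactly this argument transported through $W$; it has the advantage of reusing a lemma already proved rather than repeating its content in the concrete setting.
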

\begin{proof}
Computing the Fourier coefficients $\widehat{p_{\psi}}(k)$, $k \in \mathbb{Z}$, 
we find 
\begin{align*}
  \widehat{p_{\psi}}(k) &= \int_{0}^{1}\overline{e_{k}(t)}p_{\psi}(t)dt \\
  &= \int_{0}^{1}\overline{e_{k}(t)}\sum_{n \in \mathbb{Z}}|\widehat{\psi}(t+k)|^{2}dt \\
  &= \int_{\mathbb{R}}\overline{e_{k}(t)}|\widehat{\psi}(t)|^{2}dt \\
  &\underset{\text{by Parseval}}{=} \int_{\mathbb{R}}\overline{\psi(x)}
  \psi(x-k)dx  \\
  &= \langle \psi | \psi_{k} \rangle_{L^{2}}, \text{ } k \in \mathbb{Z}, 
\end{align*}
which proves (a) $\Longleftrightarrow$ (b).

Supposing (a), to prove that $F$ is a closable operator, note the formula 
\[
  (F^{*}f)_{k}=\int_{\mathbb{R}}f(x-k)dx, \text{ } k\in \mathbb{Z}
\]
for the adjoint operator.  So condition (a) is the assertion that each 
$\psi_{j}$ is in the domain of $F^{*}$.  Since 
$(\psi_{j})_{j \in \mathbb{Z}}$ spans a dense subspace in 
$\mathcal{H}(\psi) \subset L^{2}(\mathbb{R})$, it follows that $F$ in 
(\ref{E:5.19}) is a closable operator.
\end{proof}

\section{A Connection to Karhunen-Lo\`{e}ve Transforms}
\label{sec:6}
Suppose $X_{t}$ is a stochastic process indexed by $t$ in a finite interval 
$J$, and taking values in $L^{2}(\Omega, P)$ for some probability space 
$(\Omega, P)$. Assume the normalization $E(X_{t})=0$. Suppose the integral 
kernel $E(\overline{X_{t}} X_{s})$ can be 
diagonalized, i.e., suppose that 
\[
  \int_{J}{E(\overline{X_{t}} X_{s})\varphi_{k}(s)}ds=\lambda_{k}\varphi_{k}(t)
\]
with an ONB $(\varphi_{k})$ in $L^{2}(J)$.  If $E(X_{t})=0$ then
\[
  X_{t}(\omega)=\sum_{k}\sqrt{\lambda_{k}}\varphi_{k}(t)Z_{k}(\omega), 
  \quad \omega \in \Omega
\]
where $E(\overline{Z_{j}} Z_{k})=\delta_{j,k}$, and $E(Z_{k})=0$.
The ONB $(\varphi_{k})$ is called the \textit{KL-basis} with respect to the 
stochastic processes $\{X_{t}: t \in I \}$.

The Karhunen-Lo\`{e}ve-theorem \cite{Ash90} states that if $(X_{t})$ is 
Gaussian, then so are the random variables $(Z_{k})$.  Furthermore, they 
are $N(0,1)$ i.e., normal with mean zero and variance one, so independent 
and identically distributed. This last fact explains the familiar 
\textit{optimality} of Karhunen-Lo\`{e}ve method in transform coding.

The following result illustrates the significance of our spectral density 
functions in the analysis of Karhunen-Lo\`{e}ve transforms.

\begin{thm}
\label{T:6.1}
Let $(\Omega, P)$ by a probability space, $J \subset \mathbb{R}$ an interval 
(possibly infinite), and let $(X_{t})_{t\in J}$ be a stochastic process with 
values in $L^{2}(\Omega, P)$.  Assume $E(X_{t})=0$ for all $t \in J$.  Then 
by the spectral density theorem for unitary operator 
(see Corollary \ref{C:4.8}), the Hilbert space 
$L^{2}(J)$ splits as an orthogonal sum
\begin{equation}
\label{E:Lsum}
  L^{2}(J)=\mathcal{H}_{d}\oplus \mathcal{H}_{c}
\end{equation} 
(d is for discrete and c is for continuous) such that the following data 
exists:
\begin{itemize}
  \item[(a)] $(\varphi_{k})_{k \in \mathbb{N}}$ an ONB in $\mathcal{H}_{d}$.
  \item[(b)] $(Z_{k})_{k \in \mathbb{N}}$ : independent random variables.
  \item[(c)] $E(\overline{Z_{j}} Z_{k})=\delta_{j,k}$, and $E(Z_{k})=0$.
  \item[(d)] $(\lambda_{k}) \subset \mathbb{R}_{\geq 0}$.
  \item[(e)] $\varphi(\cdot,\cdot)$ : a Borel measure on $\mathbb{R}$ in the 
first variable, such that 
  \begin{itemize}
    \item[(i)] $\varphi(A, \cdot) \in \mathcal{H}_{c}$ for $A$ an open 
subinterval of $J$,
  \end{itemize} and
  \begin{itemize}
    \item[(ii)] $\langle\varphi(A_{1}, \cdot)| \varphi(A_{2}, \cdot)
    \rangle_{L^{2}(J)} =0$ whenever $A_{1} \cap A_{2} = \emptyset$.
  \end{itemize}
  \item[(f)] $Z(\cdot, \cdot)$ : a measurable family of random variables such
  that $Z(A_{1}, \cdot)$ and $Z(A_{2}, \cdot)$ are independent when
  $A_{1}, A_{2} \in \mathcal{B}_{J}$ and $A_{1} \cap A_{2} = \emptyset$, 
  \[
    E(\overline{Z(\lambda, \cdot)} Z(\lambda', \cdot))=\delta(\lambda-\lambda'), 
    \text{ and } E(Z(\lambda, \cdot))=0.
  \]
\end{itemize}
Finally, we get the following Karhunen-Lo\`{e}ve expansions for the 
$L^{2}(J)$-operator with integral kernel $E(X_{t} X_{s})$:
\begin{equation}
\label{E:Suml}
  \sum_{k \in \mathbb{N}} \lambda_{k}|\varphi_{k} \rangle \langle \varphi_{k}|
  + \int_{J}{\lambda|\varphi(d \lambda, \cdot)\rangle \langle 
    \varphi(d \lambda, \cdot)|}
\end{equation}
Moreover, the process decomposes thus:
\begin{equation}
\label{E:Sumsqrtl}
  X_{t}(\omega)= \sum_{k\in \mathbb{N}}\sqrt{\lambda_{k}}Z_{k}(\omega)
  \varphi_{k}(t)+\int_{J}{\sqrt{\lambda}Z(\lambda, \omega)\varphi(d\lambda, t)}. 
\end{equation}
\end{thm}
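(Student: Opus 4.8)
The plan is to reduce the theorem to the spectral/multiplicity theory of Section~\ref{sec:3} applied to the covariance operator of the process. First I would form the positive definite kernel $R(s,t):=E(\overline{X_t}X_s)$ and the associated integral operator $T_R$ on $L^{2}(J)$, given on sufficiently regular $f$ by $(T_Rf)(t)=\int_J R(t,s)f(s)\,ds$. The basic identity is that the synthesis map $V\colon L^{2}(J)\to L^{2}(\Omega,P)$, $Vf=\int_J f(s)X_s\,ds$, satisfies $\langle f\mid T_Rf\rangle_{L^{2}(J)}=\|Vf\|^{2}_{L^{2}(P)}\ge 0$, where $E(X_t)=0$ ensures the range of $V$ lands in the closed span of $(X_t)$; hence $T_R$ is positive and selfadjoint and factors as $T_R=V^{*}V$. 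I would then apply the spectral density theorem, Corollary~\ref{C:4.8} together with Lemma~\ref{L:3.1}, to the spectral resolution of $T_R$: writing $L^{2}(J)$ as an orthogonal sum of cyclic subspaces, the atoms of the attendant scalar spectral measures assemble into the discrete summand $\mathcal{H}_d$ and the non-atomic parts into $\mathcal{H}_c$, which is exactly (\ref{E:Lsum}). On $\mathcal{H}_d$ this produces eigenvalues $\lambda_k\ge 0$ with an ONB of eigenfunctions $(\varphi_k)$ solving $T_R\varphi_k=\lambda_k\varphi_k$; on $\mathcal{H}_c$ the projection-valued measure yields the $\mathcal{H}_c$-valued set function $A\mapsto\varphi(A,\cdot)$ with the orthogonality in (e)(i)--(ii).

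Next I would construct the random coefficients on the discrete block by setting $Z_k:=\lambda_k^{-1/2}V\varphi_k=\lambda_k^{-1/2}\int_J\varphi_k(t)X_t\,dt$. Using $\langle Vf\mid Vg\rangle_{L^{2}(P)}=\langle f\mid T_Rg\rangle_{L^{2}(J)}$ and the eigen-equation, a short computation gives $E(\overline{Z_j}Z_k)=\delta_{j,k}$ and $E(Z_k)=0$, establishing (b)--(d). When $(X_t)$ is Gaussian --- the case singled out in Section~\ref{sec:5} and in the Karhunen--Lo\`{e}ve theorem \cite{Ash90} --- the $Z_k$ are jointly Gaussian, so uncorrelatedness upgrades to independence; this is precisely where the Gaussian construction of Lemma~\ref{L:5.1} is used. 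The discrete block of the kernel operator is then the first sum in (\ref{E:Suml}), namely $\sum_k\lambda_k|\varphi_k\rangle\langle\varphi_k|$.

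For the continuous part I would transport the $\mu$-Gaussian stochastic-integral construction of Example~\ref{ex:5.3} and Lemma~\ref{L:5.5} through the spectral measure of the restriction $T_R\vert_{\mathcal{H}_c}$. Concretely, that spectral measure defines an orthogonal (white-noise) random set function $Z(\cdot,\cdot)$ with $E(\overline{Z(\lambda,\cdot)}Z(\lambda',\cdot))=\delta(\lambda-\lambda')$ and vanishing mean, paired with the measure $\varphi(d\lambda,\cdot)$ from the first step, producing the integral terms in (\ref{E:Suml}) and (\ref{E:Sumsqrtl}). The identity (\ref{E:Suml}) is then the spectral (Mercer-type) resolution of the kernel of $T_R$ guaranteed by the first step, and (\ref{E:Sumsqrtl}) is verified by checking that its right-hand side has covariance kernel equal to $R(s,t)$; since a centered second-order process is determined in $L^{2}(P)$ by its covariance, this gives equality in $L^{2}(P)$ for Lebesgue-a.e.\ $t$, and the Gaussian hypothesis then identifies the joint laws.

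The routine part is the discrete, Mercer-type expansion, which is essentially the spectral theorem plus the orthonormality check above. The hard part will be the continuous spectrum: making rigorous sense of $Z(d\lambda,\cdot)$ as an orthogonal random measure and of $\int_J\sqrt{\lambda}\,Z(d\lambda,\omega)\varphi(d\lambda,t)$ requires the direct-integral (multiplicity-function) form of the spectral theorem rather than a bare eigen-expansion, together with a careful matching of that direct integral against the Gaussian measure $P$ via the set-indexed construction of Example~\ref{ex:5.3}. Interchanging the stochastic integral with the spectral integral, and justifying the a.e.-in-$t$ convergence of the continuous term, is the delicate step. I also expect the independence assertions in (b) and (f), as opposed to mere orthogonality, to genuinely require the Gaussian hypothesis, so I would state that reduction explicitly rather than leave it implicit.
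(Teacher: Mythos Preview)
The paper gives no proof of Theorem~\ref{T:6.1}: the statement ends and the bibliography begins, with the only justification being the in-line clause ``by the spectral density theorem for unitary operator (see Corollary~\ref{C:4.8}).'' Your sketch is therefore not competing against a written argument; it is supplying one where the paper offers only a pointer.

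Your outline is sound and is essentially the natural elaboration: build the covariance operator $T_R=V^{*}V$ from the synthesis map $V$, take its spectral resolution, read off the discrete eigen-data $(\lambda_k,\varphi_k)$, define $Z_k=\lambda_k^{-1/2}V\varphi_k$, and handle the continuous spectrum via an orthogonal random measure constructed through the $\mu$-Gaussian machinery of Section~\ref{sec:5}. You are also right to flag that independence in (b) and (f), as opposed to mere $L^{2}$-orthogonality, genuinely requires the Gaussian hypothesis; the paper's statement does not make this explicit.

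One point worth tightening: Corollary~\ref{C:4.8}, which both you and the paper invoke, is formulated for a \emph{unitary} operator with spectral measure on $[0,1]\simeq\mathbb{T}$, whereas $T_R$ is positive selfadjoint with spectrum in $[0,\infty)$. Your argument actually uses the spectral theorem for selfadjoint operators (discrete versus continuous spectrum, projection-valued measure on $\mathbb{R}_{\ge 0}$), not Corollary~\ref{C:4.8} as written. Either cite the selfadjoint spectral theorem directly, or pass through a Cayley-type transform to reduce to the unitary case; otherwise the appeal to Corollary~\ref{C:4.8} is cosmetic and the decomposition $\mathcal{H}_d\oplus\mathcal{H}_c$ is not literally what that corollary provides. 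Apart from this bookkeeping, your plan is more complete than what the paper supplies.
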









\bibliographystyle{alpha}
\bibliography{optdec}

\begin{thebibliography}{P{\v{S}}WX03}

\bibitem[Ash90]{Ash90}
Robert~B. Ash.
\newblock {\em Information theory}.
\newblock Dover Publications Inc., New York, 1990.
\newblock Corrected reprint of the 1965 original.

\bibitem[Bag00]{Bag00}
Lawrence~W. Baggett.
\newblock An abstract interpretation of the wavelet dimension function using
  group representations.
\newblock {\em J. Funct. Anal.}, 173(1):1--20, 2000.

\bibitem[BCHL06]{BCHL06}
Radu Balan, Peter~G. Casazza, Christopher Heil, and Zeph Landau.
\newblock Density, overcompleteness, and localization of frames.
\newblock {\em Electron. Res. Announc. Amer. Math. Soc.}, 12:71--86
  (electronic), 2006.

\bibitem[BH05]{BH05}
Zhaofang Bai and Jinchuan Hou.
\newblock Characterizing isomorphisms between standard operator algebras by
  spectral functions.
\newblock {\em J. Operator Theory}, 54(2):291--303, 2005.

\bibitem[BJMP05]{BJMP05}
Lawrence Baggett, Palle Jorgensen, Kathy Merrill, and Judith Packer.
\newblock A non-{MRA} {$C\sp r$} frame wavelet with rapid decay.
\newblock {\em Acta Appl. Math.}, 89(1-3):251--270 (2006), 2005.

\bibitem[BMM99]{BMM99}
Lawrence~W. Baggett, Herbert~A. Medina, and Kathy~D. Merrill.
\newblock Generalized multi-resolution analyses and a construction procedure
  for all wavelet sets in {$\bold R\sp n$}.
\newblock {\em J. Fourier Anal. Appl.}, 5(6):563--573, 1999.

\bibitem[Chr03]{Chr03}
Ole Christensen.
\newblock {\em An introduction to frames and {R}iesz bases}.
\newblock Applied and Numerical Harmonic Analysis. Birkh\"auser Boston Inc.,
  Boston, MA, 2003.

\bibitem[CKS06]{CKS06}
Peter~G. Casazza, Gitta Kutyniok, and Darrin Speegle.
\newblock A redundant version of the {R}ado-{H}orn theorem.
\newblock {\em Linear Algebra Appl.}, 418(1):1--10, 2006.

\bibitem[Con90]{Con90}
John~B. Conway.
\newblock {\em A course in functional analysis}, volume~96 of {\em Graduate
  Texts in Mathematics}.
\newblock Springer-Verlag, New York, second edition, 1990.

\bibitem[Dau92]{Dau92}
Ingrid Daubechies.
\newblock {\em Ten lectures on wavelets}, volume~61 of {\em CBMS-NSF Regional
  Conference Series in Applied Mathematics}.
\newblock Society for Industrial and Applied Mathematics (SIAM), Philadelphia,
  PA, 1992.

\bibitem[Hel64]{Hel64}
Henry Helson.
\newblock {\em Lectures on invariant subspaces}.
\newblock Academic Press, New York, 1964.

\bibitem[Hel86]{Hel86}
Henry Helson.
\newblock {\em The spectral theorem}, volume 1227 of {\em Lecture Notes in
  Mathematics}.
\newblock Springer-Verlag, Berlin, 1986.

\bibitem[Jor06a]{Jor06}
Palle E.~T. Jorgensen.
\newblock {\em Analysis and probability: wavelets, signals, fractals}, volume
  234 of {\em Graduate Texts in Mathematics}.
\newblock Springer, New York, 2006.

\bibitem[Jor06b]{Jor06b}
Palle E.~T. Jorgensen.
\newblock Use of operator algebras in the analysis of measures from wavelets
  and iterated function systems.
\newblock In {\em Operator theory, operator algebras, and applications}, volume
  414 of {\em Contemp. Math.}, pages 13--26. Amer. Math. Soc., Providence, RI,
  2006.

\bibitem[JP05]{JP05}
Dmitry Jakobson and Iosif Polterovich.
\newblock Lower bounds for the spectral function and for the remainder in local
  {W}eyl's law on manifolds.
\newblock {\em Electron. Res. Announc. Amer. Math. Soc.}, 11:71--77
  (electronic), 2005.

\bibitem[JS07]{JoSo07}
Palle E.~T. Jorgensen and Myung-Sin Song.
\newblock Entropy encoding, hilbert space, and karhunen-lo\`{e}ve transforms.
\newblock {\em Journal of Mathematical Physics}, 48(10):103503, 2007.

\bibitem[LWW04]{LWW04}
Demetrio Labate, Guido Weiss, and Edward Wilson.
\newblock An approach to the study of wave packet systems.
\newblock In {\em Wavelets, frames and operator theory}, volume 345 of {\em
  Contemp. Math.}, pages 215--235. Amer. Math. Soc., Providence, RI, 2004.

\bibitem[MS80]{MiSa80}
A.~G. Miamee and H.~Salehi.
\newblock On the prediction of periodically correlated stochastic processes.
\newblock In {\em Multivariate analysis, V (Proc. Fifth Internat. Sympos.,
  Univ. Pittsburgh, Pittsburgh, Pa., 1978)}, pages 167--179. North-Holland,
  Amsterdam, 1980.

\bibitem[Nel69]{Nel69}
Edward Nelson.
\newblock {\em Topics in dynamics. {I}: {F}lows}.
\newblock Mathematical Notes. Princeton University Press, Princeton, N.J.,
  1969.

\bibitem[P{\v{S}}WX03]{PSWX04}
Maciej Paluszy{\'n}ski, Hrvoje {\v{S}}iki{\'c}, Guido Weiss, and Shaoliang
  Xiao.
\newblock Tight frame wavelets, their dimension functions, {MRA} tight frame
  wavelets and connectivity properties.
\newblock {\em Adv. Comput. Math.}, 18(2-4):297--327, 2003.
\newblock Frames.

\bibitem[Rud69]{Rud69}
Walter Rudin.
\newblock {\em Function theory in polydiscs}.
\newblock W. A. Benjamin, Inc., New York-Amsterdam, 1969.

\bibitem[Rud86]{Rud86}
Walter Rudin.
\newblock {\em New constructions of functions holomorphic in the unit ball of
  {${\bf C}\sp n$}}, volume~63 of {\em CBMS Regional Conference Series in
  Mathematics}.
\newblock Published for the Conference Board of the Mathematical Sciences,
  Washington, DC, 1986.

\bibitem[Sad06]{Sad06}
I.~V. Sadovnichaya.
\newblock A new estimate for the spectral function of a selfadjoint extension
  in {$L\sp 2(\Bbb R)$} of the {S}turm-{L}iouville operator with a uniformly
  locally integrable potential.
\newblock {\em Differ. Uravn.}, 42(2):188--201, 286, 2006.

\bibitem[Son07]{Son07}
M.~S. Song.
\newblock Entropy encoding in wavelet image compression.
\newblock {\em Representations, Wavelets and Frames A Celebration of the
  Mathematical Work of Lawrence Baggett}, pages 293--311, 2007.

\bibitem[TT07]{TT07}
Masayoshi Takeda and Kaneharu Tsuchida.
\newblock Differentiability of spectral functions for symmetric
  {$\alpha$}-stable processes.
\newblock {\em Trans. Amer. Math. Soc.}, 359(8):4031--4054 (electronic), 2007.

\end{thebibliography}


\subsection*{Acknowledgment}
The authors are pleased acknowledge helpful discussions
with Professor Guido Weiss, and the members of the Washington University
Wavelet Seminar. Our work was inspired by lectures of Guido Weiss,
including at the regional AMS meeting (Chicago, October 2007.)
\end{document}